\newtheorem{theorem}{Theorem}
\newtheorem{acknowledgement}[theorem]{Acknowledgement}
\newtheorem{corollary}[theorem]{Corollary}
\newtheorem{definition}[theorem]{Definition}
\newtheorem{example}[theorem]{Example}
\newtheorem{lemma}[theorem]{Lemma}
\newtheorem{proposition}[theorem]{Proposition}
\newtheorem{remark}[theorem]{Remark}
\newenvironment{proof}[1][Proof]{\noindent\textbf{#1.} }{\ \rule{0.5em}{0.5em}}
\newlength{\querylen}
\begin{document}

\title{Self-dual continuous processes}
\date{}
\author{Thorsten Rheinl\"{a}nder\thanks{Department of Statistics, London School of
Economics, Houghton Street, London, WC2A 2AE, United Kingdom. phone +44 (0)20
7955 7169 fax +44 (0)20 7955 7416 (\texttt{T.Rheinlander@lse.ac.uk}).
Corresponding author}\ \thinspace\ and Michael Schmutz\thanks{Department of
Mathematical Statistics and Actuarial Science, University of Bern,
Sidlerstrasse 5, 3012 Bern, Switzerland. phone +41 (0)31 631 88 18 \ \ fax +41
(0)31 631 38 05 (\texttt{michael.schmutz@stat.unibe.ch})}}
\maketitle

\begin{abstract}
The important application of semi-static hedging in financial markets
naturally leads to the notion of quasi self-dual processes which is, for
continuous semimartingales, related to symmetry properties of both their
ordinary as well as their stochastic logarithms. We provide a structure result
for continuous quasi self-dual processes. Moreover, we give a characterisation
of continuous Ocone martingales via a strong version of self-duality.

\bigskip

\textit{Keywords: }self-duality, symmetric processes, Ocone martingales,
semi-static hedging

\end{abstract}

\vspace{-5mm}

\medskip\noindent

\section{Introduction}

The duality principle in option pricing relates different financial products
by a certain change of measure. It allows to transform complicated financial
derivatives into simpler ones in a suitable dual market. For a comprehensive
treatment, see~\cite{eber:pap:shir08,eber:pap:shir08b} and the literature
cited therein.

Sometimes it is even possible to semi-statically hedge path-dependent barrier
options with European ones. These are options which only depend on the asset
price at maturity. Here semi-static refers to trading at most at inception and
a finite number of stopping times like hitting times of barriers. The
possibility of this hedge, however, requires a certain symmetry property of
the asset price which has to remain invariant under the duality
transformation, possibly after a power transform. This leads naturally to the
concepts of self-duality, resp. quasi self-duality, see~\cite{car:cho97} and
more recently~\cite{CL,MoS}. For references to the large literature of the
special case of put-call symmetry,
see~\cite{CL,eln:jem99,faj:mor06,faj:mor10,T}.

Continuous symmetric processes have been characterised in \cite{T}, and it is
shown therein that the conditional symmetry property is related to the
self-duality of their stochastic exponentials. We extend this study by
exploring the structure of quasi self-dual processes as well as characterising
continuous Ocone martingales using results from \cite{VY} and a strong version
of self-duality. Ocone martingales are a very important class of conditionally
symmetric martingales; indeed, Tehranchi raised in \cite{T} the question
whether \emph{all }conditionally symmetric martingales are Ocone. This
question is still open. We do provide, however, an example of a non-Ocone
martingale in continuous time which is process, but not conditionally symmetric.

\section{Definitions and general properties}

We work on a filtered probability space $\left(  \Omega,\mathcal{F}%
,\mathbb{F},P\right)  $ where unless otherwise stated, the filtration
satisfies the usual conditions with $\mathcal{F}_{0}$ being trivial up to
$P$-null sets, and fix a finite but arbitrary time horizon $T>0$. All
stochastic processes are RCLL and defined on $\left[  0,T\right]  $ unless
otherwise stated. We understand positive and negative in the strict sense.

\begin{definition}
\label{def:r-sym} Let $M$ be an adapted process. $M$ is \textbf{conditionally
symmetric} if for any stopping time $\tau\in\left[  0,T\right]  $ and any
non-negative Borel function $f$%
\begin{equation}
E\left[  \left.  f\left(  M_{T}-M_{\tau}\right)  \right\vert \mathcal{F}%
_{\tau}\right]  =E\left[  \left.  f\left(  M_{\tau}-M_{T}\right)  \right\vert
\mathcal{F}_{\tau}\right]  . \label{symmetry condition}%
\end{equation}

\end{definition}

Here it is permissible that both sides of the equation are infinite. If $M$ is
an integrable conditionally symmetric process, then
condition~(\ref{symmetry condition}) implies that $M$ is a martingale by
choosing $f(x)=x$ ($=x^{+}-x^{-}$).

\begin{definition}
Let $S$ be a positive adapted process. $S$ is \textbf{self-dual }if for any
stopping time $\tau\in\left[  0,T\right]  $ and any non-negative Borel
function $f$ we have%
\begin{equation}
E\left[  \left.  f\left(  \frac{S_{T}}{S_{\tau}}\right)  \right\vert
\mathcal{F}_{\tau}\right]  =E\left[  \left.  \frac{S_{T}}{S_{\tau}}f\left(
\frac{S_{\tau}}{S_{T}}\right)  \right\vert \mathcal{F}_{\tau}\right]  .
\label{self-duality}%
\end{equation}

\end{definition}

These definitions are new, and are motivated by the fact that in applications
to semi-static hedging one typically considers hitting times of barriers which
are stopping times. They differ from the ones used in~\cite{T} who uses
bounded measurable $f$ instead, and in particular deterministic times.
However, all corresponding results in~\cite{T} applied in this paper can be
adapted to our setting.

In the case when $S$ is a martingale, we can define a probability measure $Q$,
the so-called \emph{dual measure}, via%
\begin{equation}
\frac{dQ}{dP}=\frac{S_{T}}{S_{0}}. \label{measure Q}%
\end{equation}
Similarly, if $E\left[  \sqrt{S_{T}}\right]  <\infty$, or $E\left[  S_{T}%
^{w}\right]  <\infty$ for a $w\in\left[  0,1\right]  $, respectively, we
define probability measures $H$, sometimes called `half measure', respectively
$P^{w}$, via%
\begin{equation}
\frac{dH}{dP}=\frac{\sqrt{S_{T}}}{E\left[  \sqrt{S_{T}}\right]  }\,,\quad
\frac{dP^{w}}{dP}=\frac{S_{T}^{w}}{E\left[  S_{T}^{w}\right]  }\,.
\label{measure H}%
\end{equation}

Note that the integrability of $S_{T}=S_{0}\exp(X_{T})$ under $P$ implies the
existence of the moment generating function of $X_{T}$ under $H$ for an open
interval including the origin, i.e. $X_{T}$ has all moments under $H$.

By Bayes' formula, the self-duality condition~(\ref{self-duality}) can be
expressed for a martingale $S$ in terms of the dual measure $Q$ defined
in~(\ref{measure Q}) as%
\begin{equation}
E_{P}\left[  \left.  f\left(  \frac{S_{T}}{S_{\tau}}\right)  \right\vert
\mathcal{F}_{\tau}\right]  =E_{Q}\left[  \left.  f\left(  \frac{S_{\tau}%
}{S_{T}}\right)  \right\vert \mathcal{F}_{\tau}\right]  .
\label{self-duality numeraire}%
\end{equation}

\begin{lemma}
\textbf{(\cite{T}, Lemma 3.2.) }\label{Tehranchi lemma}\textbf{ }A positive
continuous martingale $S$ is self-dual if and only if%
\begin{equation}
E_{P}\left[  \left.  \left(  \frac{S_{T}}{S_{\tau}}\right)  ^{p}\right\vert
\mathcal{F}_{\tau}\right]  =E_{P}\left[  \left.  \left(  \frac{S_{T}}{S_{\tau
}}\right)  ^{1-p}\right\vert \mathcal{F}_{\tau}\right]
\label{self-duality lemma}%
\end{equation}
for all complex $p=a+ib$ with $a\in\left[  0,1\right]  $ and all stopping
times $\tau\in\left[  0,T\right]  $.
\end{lemma}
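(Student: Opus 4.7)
My plan is to prove both directions through a careful interplay between the self-duality identity and Mellin/Fourier transforms of the conditional law of $S_T/S_\tau$.

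For the forward direction, I would first note that for real $p \in [0,1]$ the function $f(x)=x^{1-p}$ is a non-negative Borel function, so applying the self-duality condition~(\ref{self-duality}) with this choice yields
$E_P[(S_T/S_\tau)^{1-p}\mid\mathcal{F}_\tau] = E_P[(S_T/S_\tau)\cdot(S_\tau/S_T)^{1-p}\mid\mathcal{F}_\tau] = E_P[(S_T/S_\tau)^{p}\mid\mathcal{F}_\tau]$,
which is exactly~(\ref{self-duality lemma}) for real $p$. Since $S$ is a positive martingale, $E_P[S_T/S_\tau\mid\mathcal{F}_\tau]=1$, and concavity gives $(S_T/S_\tau)^a \le 1+S_T/S_\tau$ for any $a\in[0,1]$, so both conditional expectations in~(\ref{self-duality lemma}) make sense for any complex $p$ with $a=\operatorname{Re}(p)\in[0,1]$. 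I would then argue that as functions of $p$ both sides of~(\ref{self-duality lemma}) are analytic on the open strip $\{0<\operatorname{Re}(p)<1\}$ and continuous on its closure (differentiation under the conditional expectation is justified by the integrable dominating function $1+S_T/S_\tau$), and since they coincide on the real interval $(0,1)$, the identity theorem extends the equality to the full closed strip.

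For the reverse direction, I would specialize to the critical line $p=\tfrac12+iu$ for $u\in\mathbb{R}$. Writing $Y=\log(S_T/S_\tau)$ and letting $m_\tau(\omega,dy)$ be a regular version of the conditional law of $Y$ given $\mathcal{F}_\tau$, the identity~(\ref{self-duality lemma}) on this line reads
$\int e^{y/2}e^{iuy}\,m_\tau(dy) = \int e^{y/2}e^{-iuy}\,m_\tau(dy) = \int e^{-y/2}e^{iuy}\,\tilde m_\tau(dy)$
where $\tilde m_\tau$ denotes the push-forward of $m_\tau$ under $y\mapsto -y$. Both $e^{y/2}m_\tau(dy)$ and $e^{-y/2}\tilde m_\tau(dy)$ are finite measures (with mass $E_P[(S_T/S_\tau)^{1/2}\mid\mathcal{F}_\tau]<\infty$), so by uniqueness of Fourier transforms of finite measures they are equal as measures on $\mathbb{R}$; equivalently, $m_\tau(dy) = e^{-y}\tilde m_\tau(dy)$. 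For an arbitrary non-negative Borel $f$ on $(0,\infty)$, set $g(y)=f(e^y)$ and compute
$E_P[f(S_T/S_\tau)\mid\mathcal{F}_\tau] = \int g(y)\,m_\tau(dy) = \int g(y)e^{-y}\,\tilde m_\tau(dy) = \int e^{y}g(-y)\,m_\tau(dy) = E_P[(S_T/S_\tau)f(S_\tau/S_T)\mid\mathcal{F}_\tau]$,
which is precisely~(\ref{self-duality}).

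The only delicate point is the analyticity/continuity step in the forward direction and the need for a regular conditional distribution $m_\tau$ in the converse; both rest on standard tools (dominated convergence for the complex derivative, and Polishness of $\mathbb{R}$ for regular conditional laws) so I do not expect a serious obstacle. Continuity of $S$ is not directly invoked in the argument, but it ensures that the conditional distributions $m_\tau$ are non-degenerate enough that the Fourier uniqueness step applies cleanly.
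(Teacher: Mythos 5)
The paper does not actually prove this lemma: it is quoted verbatim from Tehranchi (reference [T], Lemma 3.2) and used as an imported tool, so there is no in-paper proof to compare against. Your argument is correct and is essentially the standard one behind that result: the forward direction by substituting $f(x)=x^{1-p}$ for real $p\in[0,1]$ and extending to the closed strip by analyticity, and the converse by reading the identity on the line $\operatorname{Re}(p)=\tfrac12$ as equality of the Fourier transforms of the finite measures $e^{y/2}m_\tau(dy)$ and $e^{-y/2}\tilde m_\tau(dy)$, which is the same tilting that underlies the half measure $H$ used throughout the paper. Two small points to tighten: (i) since each identity holds only up to a $p$- (resp.\ $u$-) dependent null set, you should fix a regular conditional distribution and first obtain the equality simultaneously for all rational parameters, then pass to all of them by continuity before invoking the identity theorem or Fourier uniqueness; and (ii) for analyticity in the open strip the relevant dominating function is a bound for $x^a\lvert\log x\rvert$, which on compact subintervals $a\in[\epsilon,1-\epsilon]$ is still controlled by a constant times $1+x$, so the step goes through but not literally with the bound $1+S_T/S_\tau$ as written. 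Your closing remark about continuity of $S$ ensuring ``non-degeneracy'' is unnecessary: Fourier uniqueness for finite measures needs no such hypothesis, and indeed the lemma holds for positive martingales without continuity; the continuity assumption here is simply inherited from the paper's setting.
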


\medskip

For the measure $H$ (corresponding to $w=1/2$) the following proposition has
been stated in slightly different settings in~\cite{CL,MoS} and~\cite{T}, and
also for $w=1$, i.e.\ for $Q$. Similar unconditional multivariate results are
given in~\cite{mol:sch11}.

\begin{proposition}
\label{symmetry under H-gen}Let $S=\exp\left(  X\right)  $ be a martingale.
Then $S$ is self-dual if and only if for any stopping time $\tau\in[0,T]$ and
any non-negative Borel function $f$
\begin{equation}
E_{P^{w}}\left[  \left.  f\left(  X_{T}-X_{\tau}\right)  \right\vert
\mathcal{F}_{\tau}\right]  =E_{P^{1-w}}\left[  \left.  f\left(  X_{\tau}%
-X_{T}\right)  \right\vert \mathcal{F}_{\tau}\right]
\label{eq:symmetry-under-H-gen}%
\end{equation}
holds for at least one (and then necessarily for all) $w\in[0,1]$.
\end{proposition}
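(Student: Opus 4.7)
The plan is to reformulate both conditions as almost-sure identities in $Y := X_T - X_\tau$. Set $Z_w := E_P[e^{wY}\mid\mathcal{F}_\tau]$; conditional Jensen applied to the positive martingale $S$ yields $0 < Z_w \le 1$ for $w \in [0,1]$. Using $S_T^w = S_\tau^w\,e^{wY}$ and cancelling the $\mathcal{F}_\tau$-measurable factor $S_\tau^w$ in numerator and denominator, Bayes' rule at the stopping time $\tau$ gives
\[
E_{P^w}[G\mid\mathcal{F}_\tau] \;=\; \frac{E_P[G\,e^{wY}\mid\mathcal{F}_\tau]}{Z_w}.
\]
Hence \eqref{eq:symmetry-under-H-gen} is equivalent to
\begin{equation}
Z_{1-w}\, E_P[f(Y)\,e^{wY}\mid\mathcal{F}_\tau] \;=\; Z_w\, E_P[f(-Y)\,e^{(1-w)Y}\mid\mathcal{F}_\tau] \tag{$\ast$}
\end{equation}
for all non-negative Borel $f$, while self-duality, rewritten via $S_T/S_\tau = e^Y$, becomes $E_P[h(Y)\mid\mathcal{F}_\tau] = E_P[e^Y h(-Y)\mid\mathcal{F}_\tau]$ for all non-negative Borel $h$.

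For the direction \emph{self-duality} $\Rightarrow$ $(\ast)$ for every $w \in [0,1]$, I would apply the self-duality identity to $h(y) = f(y)\,e^{wy}$: the right-hand side transforms as $E_P[e^Y f(-Y) e^{-wY}\mid\mathcal{F}_\tau] = E_P[f(-Y) e^{(1-w)Y}\mid\mathcal{F}_\tau]$, matching the right numerator of $(\ast)$. The choice $h(y) = e^{wy}$ yields $Z_w = Z_{1-w}$, so the denominators also agree; multiplication gives $(\ast)$.

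For the converse, assume $(\ast)$ holds for a single $w_0 \in [0,1]$. Inserting the non-negative Borel function $f(y) = e^{(1-w_0)y}$ collapses the two conditional expectations to $E_P[e^Y\mid\mathcal{F}_\tau] = 1$ (martingale property) and $E_P[1\mid\mathcal{F}_\tau] = 1$ respectively, forcing $Z_{1-w_0} = Z_{w_0}$. Dividing $(\ast)$ by this common positive value and then substituting $f(y) \mapsto f(y)\,e^{-w_0 y}$ (still a non-negative Borel function) produces
\[
E_P[f(Y)\mid\mathcal{F}_\tau] \;=\; E_P[f(-Y)\,e^Y\mid\mathcal{F}_\tau],
\]
which is self-duality. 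Combined with the forward direction, validity of $(\ast)$ for one $w$ automatically promotes to validity for all $w \in [0,1]$.

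The only mildly delicate point is that $f$ is only assumed non-negative, so conditional expectations may a priori be infinite. The bounds $0 < Z_w \le 1$ together with the martingale normalization $E_P[e^Y\mid\mathcal{F}_\tau] = 1$ ensure that the single key test $f(y) = e^{(1-w_0)y}$ yields finite, non-trivial values on both sides of $(\ast)$ (whereas the tempting choice $f \equiv 1$ produces only the tautology $Z_w Z_{1-w} = Z_w Z_{1-w}$), so the crucial step extracting $Z_{w_0} = Z_{1-w_0}$ is legitimate and no further input (in particular not Lemma \ref{Tehranchi lemma}) is needed.
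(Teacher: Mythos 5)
Your proof is correct, and it takes a genuinely different and more elementary route than the paper's. The paper proves the proposition by passing through Lemma~\ref{Tehranchi lemma}: the forward direction computes the conditional characteristic functions of $X_{T}-X_{\tau}$ under $P^{w}$ and of $X_{\tau}-X_{T}$ under $P^{1-w}$ via Bayes' formula and the complex-moment identity~(\ref{self-duality lemma}), and the converse reverses this computation to recover~(\ref{self-duality lemma}) for all $p=a+ib$ with $a\in[0,1]$. You instead work directly with the defining identity $E_{P}[h(Y)\mid\mathcal{F}_{\tau}]=E_{P}[e^{Y}h(-Y)\mid\mathcal{F}_{\tau}]$, $Y=X_{T}-X_{\tau}$, for arbitrary non-negative Borel $h$, and obtain both directions by the substitutions $h(y)=f(y)e^{wy}$ and $f(y)\mapsto f(y)e^{-w_{0}y}$; the only analytic input is $0<Z_{w}\le 1$ from conditional Jensen and the normalisation $E_{P}[e^{Y}\mid\mathcal{F}_{\tau}]=1$, which, as you note, legitimise the cross-multiplication by $Z_{w}Z_{1-w}$ and the extraction of $Z_{w_{0}}=Z_{1-w_{0}}$ even when the conditional expectations of $f$ are infinite. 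What your approach buys: it avoids complex exponents and the identification of conditional laws via characteristic functions altogether, and it does not invoke Lemma~\ref{Tehranchi lemma} at all --- which is a genuine gain, since that lemma is stated in the paper for positive \emph{continuous} martingales whereas the proposition assumes no continuity, so your argument is actually self-contained at the stated level of generality. The one point worth spelling out if you write this up is the measure-theoretic bookkeeping you are implicitly using: the generalised Bayes formula for non-negative, possibly non-integrable integrands (valid in $[0,\infty]$ by monotone convergence), and the bijection $f\leftrightarrow f\circ\exp$ between non-negative Borel functions on $(0,\infty)$ and on $\mathbb{R}$ that turns the self-duality condition~(\ref{self-duality}) into your additive form.
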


For the half measure we immediately obtain the following special case.

\begin{corollary}
\label{symmetry under H}Let $S=\exp\left(  X\right)  $ be a martingale. Then
$S$ is self-dual if and only if $X$ is conditionally symmetric with respect to
$H$.
\end{corollary}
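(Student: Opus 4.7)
The plan is to deduce this directly from Proposition~\ref{symmetry under H-gen} by specialising the free parameter $w\in[0,1]$ to the symmetric value $w=1/2$. The proposition asserts that $S$ is self-dual iff \eqref{eq:symmetry-under-H-gen} holds for \emph{some} (and then for all) $w\in[0,1]$, so in particular it suffices to verify the equivalence at $w=1/2$.

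First, I would observe that the definition of the measures in \eqref{measure H} gives $P^{1/2}=H$, since $dP^{1/2}/dP = S_T^{1/2}/E[S_T^{1/2}] = \sqrt{S_T}/E[\sqrt{S_T}] = dH/dP$. Consequently $P^{1-w}=P^{1/2}=H$ as well, so for this choice of $w$ the two conditional expectations in \eqref{eq:symmetry-under-H-gen} are taken with respect to the same measure $H$. The condition \eqref{eq:symmetry-under-H-gen} thus reduces to
\begin{equation*}
E_H\!\left[\left.f(X_T-X_\tau)\right|\mathcal{F}_\tau\right] = E_H\!\left[\left.f(X_\tau-X_T)\right|\mathcal{F}_\tau\right]
\end{equation*}
for every stopping time $\tau\in[0,T]$ and every non-negative Borel function $f$, which is precisely the statement that $X$ is conditionally symmetric with respect to $H$ in the sense of Definition~\ref{def:r-sym}.

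Finally, I would invoke the implication of Proposition~\ref{symmetry under H-gen} that validity at one value of $w$ is equivalent to self-duality of $S$, so the two implications of the corollary follow at once. There is no real obstacle here: the only point to verify is the matching of the measures $P^{1/2}$ and $H$, and the observation that because $1-w=w$ at $w=1/2$ the asymmetric-looking condition collapses to the genuine conditional-symmetry condition under a single measure.
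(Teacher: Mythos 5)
Your proof is correct and is exactly the paper's intended argument: the paper presents the corollary as an immediate special case of Proposition~\ref{symmetry under H-gen} at $w=1/2$, where $P^{1/2}=P^{1-1/2}=H$ and~\eqref{eq:symmetry-under-H-gen} collapses to conditional symmetry of $X$ under $H$. The only difference is that you spell out the identification of measures, which the paper leaves implicit.
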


\begin{proof}
[Proof of Proposition~\ref{symmetry under H-gen}]As a consequence of the
martingale property of $S$ and H\"{o}lder's inequality we have that both
$E_{P}\left[  \left.  e^{w(X_{T}-X_{\tau})}\right\vert \mathcal{F}_{\tau
}\right]  $, for all $w\in\lbrack0,1]$, as well as $\left.  |E_{P}\left[
e^{p(X_{T}-X_{\tau})}|\mathcal{F}_{\tau}\right]  \right\vert $, for all
complex $p=a+ib$ with $a\in\left[  0,1\right]  $, $\tau\in\lbrack0,T]$, are finite~a.s.

Let $S$ be self-dual, $w\in\lbrack0,1]$, so that~(\ref{self-duality lemma})
implies the following two equalities:
\begin{align}
E_{P}\left[  \left.  e^{w(X_{T}-X_{\tau})}\right\vert \mathcal{F}_{\tau
}\right]   &  =E_{P}\left[  \left.  e^{(1-w)(X_{T}-X_{\tau})}\right\vert
\mathcal{F}_{\tau}\right]  ,\label{eq:gen-h-im1}\\
E_{P}\left[  \left.  e^{(w+i\theta)(X_{T}-X_{\tau})}\right\vert \mathcal{F}%
_{\tau}\right]   &  =E_{P}\left[  \left.  e^{(1-w-i\theta)(X_{T}-X_{\tau}%
)}\right\vert \mathcal{F}_{\tau}\right]  , \label{eq:gen-h-im2}%
\end{align}
for $\theta\in\mathbb{R}$. By applying Bayes' formula we obtain
\begin{align*}
E_{P^{w}}\left[  \left.  e^{i\theta\left(  X_{T}-X_{\tau}\right)  }\right\vert
\mathcal{F}_{\tau}\right]   &  =\frac{E_{P}\left[  \left.  e^{\left(
w+i\theta\right)  \left(  X_{T}-X_{\tau}\right)  }\right\vert \mathcal{F}%
_{\tau}\right]  }{E_{P}\left[  \left.  e^{w\left(  X_{T}-X_{\tau}\right)
}\right\vert \mathcal{F}_{\tau}\right]  },\\
E_{P^{1-w}}\left[  \left.  e^{i\theta\left(  X_{\tau}-X_{T}\right)
}\right\vert \mathcal{F}_{\tau}\right]   &  =\frac{E_{P}\left[  \left.
e^{\left(  1-w-i\theta\right)  \left(  X_{T}-X_{\tau}\right)  }\right\vert
\mathcal{F}_{\tau}\right]  }{E_{P}\left[  \left.  e^{(1-w)\left(
X_{T}-X_{\tau}\right)  }\right\vert \mathcal{F}_{\tau}\right]  }\,,
\end{align*}
so that in view of~(\ref{eq:gen-h-im1},\ \ref{eq:gen-h-im2}) the
r.h.s.\ coincide and so do the l.h.s.\ Since the conditional characteristic
functions $\left(  X_{T}-X_{\tau}\right)  $ under $P^{w}$ coincide with the
ones of $\left(  X_{\tau}-X_{T}\right)  $ under $P^{1-w}$, we end up
with~(\ref{eq:symmetry-under-H-gen}) for the claimed cases.

On the other hand, for an arbitrary $w\in\lbrack0,1]$, the $P$-martingale
property of $S$, and by Bayes' formula we see that the l.h.s.\ (and hence the
r.h.s.) of the following equations coincide:
\begin{align}
E_{P^{w}}\left[  \left.  e^{-w(X_{T}-X_{\tau})}\right\vert \mathcal{F}_{\tau
}\right]   &  =E_{P}\left[  \left.  e^{w(X_{T}-X_{\tau})}\right\vert
\mathcal{F}_{\tau}\right]  ^{-1},\label{eq:gen-h-im3}\\
E_{P^{1-w}}\left[  \left.  e^{-w(X_{\tau}-X_{T})}\right\vert \mathcal{F}%
_{\tau}\right]   &  =\frac{E_{P}\left[  \left.  e^{X_{T}-X_{\tau}}\right\vert
\mathcal{F}_{\tau}\right]  }{E_{P}\left[  \left.  e^{(1-w)(X_{T}-X_{\tau}%
)}\right\vert \mathcal{F}_{\tau}\right]  }=E_{P}\left[  \left.  e^{(1-w)(X_{T}%
-X_{\tau})}\right\vert \mathcal{F}_{\tau}\right]  ^{-1}\,.
\label{eq:gen-h-im4}%
\end{align}
Furthermore, we have for all complex $p=a+ib$ with $a\in\lbrack0,1]$ that
\[
E_{P^{w}}\left[  \left.  e^{(p-w)(X_{T}-X_{\tau})}\right\vert \mathcal{F}%
_{\tau}\right]  =E_{P^{1-w}}\left[  \left.  e^{(w-p)(X_{T}-X_{\tau}%
)}\right\vert \mathcal{F}_{\tau}\right]  \,.
\]
Combining this equality with the fact that the r.h.s.\ of~(\ref{eq:gen-h-im3})
and~(\ref{eq:gen-h-im4}) coincide we obtain the equality of the l.h.s.\ of the
following two equations
\begin{align*}
E_{P^{w}}\left[  \left.  \frac{e^{p\left(  X_{T}-X_{\tau}\right)  }%
}{e^{w\left(  X_{T}-X_{\tau}\right)  }}\right\vert \mathcal{F}_{\tau}\right]
E_{P}\left[  \left.  e^{w\left(  X_{T}-X_{\tau}\right)  }\right\vert
\mathcal{F}_{\tau}\right]   &  =E_{P}\left[  \left.  e^{p(X_{T}-X_{\tau}%
)}\right\vert \mathcal{F}_{\tau}\right] \\
E_{P^{1-w}}\left[  \left.  \frac{e^{(1-p)\left(  X_{T}-X_{\tau}\right)  }%
}{e^{(1-w)\left(  X_{T}-X_{\tau}\right)  }}\right\vert \mathcal{F}_{\tau
}\right]  E_{P}\left[  \left.  e^{(1-w)\left(  X_{T}-X_{\tau}\right)
}\right\vert \mathcal{F}_{\tau}\right]   &  =E_{P}\left[  \left.
e^{(1-p)(X_{T}-X_{\tau})}\right\vert \mathcal{F}_{\tau}\right]  \,.
\end{align*}
The self-duality property then follows by using the equality of the r.h.s.\ of
the above equations and Lemma~\ref{Tehranchi lemma}.
\end{proof}

\bigskip

The following definition and proposition follow the unconditional versions
stated in~\cite{MoS}, see also~\cite{CL}.

\begin{definition}
An adapted positive process $S$ is \textbf{quasi self-dual }of order
$\alpha\in%
\mathbb{R}
$ if for any stopping time $\tau\leq T$ and any non-negative Borel function
$f$ it holds that%
\begin{equation}
\label{eq:def-quasi-equation}E_{P}\left[  \left.  f\left(  \frac{S_{T}%
}{S_{\tau}}\right)  \right\vert \mathcal{F}_{\tau}\right]  =E_{P}\left[
\left.  \left(  \frac{S_{T}}{S_{\tau}}\right)  ^{\alpha}f\left(  \frac
{S_{\tau}}{S_{T}}\right)  \right\vert \mathcal{F}_{\tau}\right]  .
\end{equation}
In particular, for all $\tau\leq T$
\[
E_{P}\left[  \left.  \left(  \frac{S_{T}}{S_{\tau}}\right)  ^{\alpha
}\right\vert \mathcal{F}_{\tau}\right]  =1.
\]

\end{definition}

\begin{proposition}
[Characterization of quasi self-duality]%
\label{characterization of quasi self-duality}$S$ is quasi self-dual of order
$\alpha\neq0$\textbf{\ }if and only if $S^{\alpha}$ is self-dual.
\end{proposition}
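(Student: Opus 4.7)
The plan is to observe that the two functional equations in question are literal reparametrisations of one another, connected by the change of variable $y = x^\alpha$ on $(0,\infty)$. Since $S$ is positive and $\alpha\neq 0$, the process $S^\alpha$ is again a positive adapted process, so the self-duality condition is meaningful for it, and the ratios satisfy $S^\alpha_T/S^\alpha_\tau = (S_T/S_\tau)^\alpha$ and $S^\alpha_\tau/S^\alpha_T = (S_\tau/S_T)^\alpha$.

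First I would write out what self-duality of $S^\alpha$ asserts: for every non-negative Borel $g$ and every stopping time $\tau\in[0,T]$,
$$ E_P\!\left[\left.g\!\left(\tfrac{S_T}{S_\tau}\right)^\alpha\right|\mathcal{F}_\tau\right]
   = E_P\!\left[\left.\left(\tfrac{S_T}{S_\tau}\right)^{\!\alpha} g\!\left(\tfrac{S_\tau}{S_T}\right)^\alpha\right|\mathcal{F}_\tau\right], $$
where for brevity I have abbreviated $g(z^\alpha)$ by $g(z)^\alpha$ in the display above (to be read as the composition). Then I would introduce the correspondence $g\leftrightarrow f$ defined by $f(x) := g(x^\alpha)$, whose inverse is $g(y) := f(y^{1/\alpha})$. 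Because $x\mapsto x^\alpha$ is a Borel homeomorphism of $(0,\infty)$ onto itself for any $\alpha\neq 0$, this map is a bijection between the non-negative Borel functions on $(0,\infty)$. Substituting yields exactly the defining equation \eqref{eq:def-quasi-equation} for quasi self-duality of $S$ with test function $f$, and conversely starting from quasi self-duality and applying the inverse change of variable produces self-duality of $S^\alpha$ with test function $g$.

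Both implications are therefore obtained by a single substitution, so I do not anticipate any genuine obstacle. The only point requiring care is that the reparametrisation $f\leftrightarrow g$ really exhausts all admissible test functions; this is immediate since $(0,\infty)\ni x\mapsto x^\alpha$ is a Borel isomorphism whenever $\alpha\neq 0$, and positivity of $S$ ensures that all arguments appearing inside $f$ and $g$ lie in $(0,\infty)$.
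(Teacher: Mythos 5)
Your proposal is correct and is essentially the paper's own argument: the paper's proof is precisely the one-line observation that the substitution $g(x)=f(x^{\alpha})$, with inverse $h(y)=f(y^{1/\alpha})$, turns the self-duality equation for $S^{\alpha}$ into the quasi self-duality equation for $S$ and vice versa. Your additional remark that $x\mapsto x^{\alpha}$ is a Borel bijection of $(0,\infty)$, so the correspondence exhausts all non-negative Borel test functions, is the only detail the paper leaves implicit.
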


\begin{proof}
This follows by considering for each $f$ the functions $g$ defined by
$g\left(  x\right)  =f\left(  x^{\alpha}\right)  $, respectively $h$ given by
$h\left(  x\right)  =f\left(  x^{1/\alpha}\right)  $, $x>0$.
\end{proof}

\section{Quasi self-dual continuous martingales}

The goal of this section is to clarify the structure of quasi self-dual
processes in a continuous martingale setting which comprises some Brownian
motion-driven stochastic volatility models in financial applications.
Following~\cite{T} we assume throughout this section that every $(\mathcal{F}%
_{t})$-martingale is continuous. We refer to~\cite{RY} for all unexplained terminology.

For every continuous conditionally symmetric martingale $Y$, $Y_{0}=0$, such
that its stochastic exponential $\mathcal{E(}Y\mathcal{)}$ is a martingale,
one can define the change of measure%
\[
\frac{dQ}{dP}=\mathcal{E}\left(  Y\right)  _{T}=\exp\left(  Y_{T}-\frac{1}%
{2}\left[  Y\right]  _{T}\right)  .
\]

In the sequel, we assume that $Y$ is a continuous martingale with $Y_{0}=0$.
Let $X=Y-\frac{1}{2}\left[  Y\right]  $ and observe that $\left[  X\right]
=\left[  Y\right]  $, hence $Y=X+\frac{1}{2}\left[  X\right]  $. We assume
w.l.o.g. that $S_{0}=1$ and set $S=\exp(X)=\mathcal{E}\left(  Y\right)  $. By
Proposition~\ref{symmetry under H}, the self-duality of a martingale $S$ is
equivalent to the conditional symmetry of $X$ under the measure $H$. The next
result is significantly more difficult to prove.

\begin{theorem}
[Tehranchi \cite{T}, Theorem~3.1.]\label{symmetry and self-duality} The
continuous martingale $S$ is self-dual if and only if $S$ is of the form
$S=\mathcal{E}\left(  Y\right)  $ for a conditionally symmetric continuous
local martingale $Y$.
\end{theorem}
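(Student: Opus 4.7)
The plan is to exploit Corollary~\ref{symmetry under H}, which identifies self-duality of $S=\exp(X)$ with conditional symmetry of $X=Y-\tfrac{1}{2}[Y]$ under the half measure $H$. The theorem thereby reduces to showing the equivalence
\[
X \text{ is }H\text{-conditionally symmetric}\quad\Longleftrightarrow\quad Y\text{ is }P\text{-conditionally symmetric}.
\]
Using Lemma~\ref{Tehranchi lemma}, the left-hand condition admits the analytic reformulation
\[
\phi_{\tau}(p):=E_{P}\!\left[e^{p(W-V/2)}\,\big|\,\mathcal{F}_{\tau}\right]=\phi_{\tau}(1-p)
\]
for all stopping times $\tau\leq T$ and complex $p$ with $\Re p\in[0,1]$, where $W:=Y_{T}-Y_{\tau}$ and $V:=[Y]_{T}-[Y]_{\tau}$; the right-hand condition amounts to real-valuedness of the conditional characteristic function $\psi_{\tau}(\theta):=E_{P}[e^{i\theta W}|\mathcal{F}_{\tau}]$ for all $\tau$ and $\theta\in\mathbb{R}$.

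For the direction ``$Y$ conditionally symmetric implies $S$ self-dual,'' I would first promote the marginal conditional symmetry of $Y$ at every stopping time to a joint symmetry of the post-$\tau$ path, by iterating Definition~\ref{def:r-sym} at subsequent stopping times and invoking continuity. Since $V$ is a pathwise functional of $Y$ on $[\tau,T]$ invariant under the sign-flip of $Y$, this yields the joint conditional symmetry $(W,V)\stackrel{d}{=}(-W,V)$ given $\mathcal{F}_{\tau}$ under $P$. Substituting this into $\phi_{\tau}$ together with the $P$-martingale identity $E_{P}[e^{W-V/2}|\mathcal{F}_{\tau}]=1$ then yields $\phi_{\tau}(p)=\phi_{\tau}(1-p)$ after a short algebraic manipulation.

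For the converse, the significantly harder direction, one begins from $\phi_{\tau}(p)=\phi_{\tau}(1-p)$ and seeks to derive real-valuedness of $\psi_{\tau}$. A natural route exploits the parametric family of measures $P^{(w)}$ from Proposition~\ref{symmetry under H-gen}: self-duality translates into a matching of the conditional law of $X_{T}-X_{\tau}$ under $P^{(w)}$ with that of $X_{\tau}-X_{T}$ under $P^{(1-w)}$ for every $w\in[0,1]$. Combined with Girsanov's theorem applied to the density $S_{T}^{w}/E_{P}[S_{T}^{w}]$ and the continuity of $Y$, one aims to transfer this symmetry back to a statement purely about $Y$ under $P$, for instance by letting $w\to 0$ or by exploiting analyticity in $w$.

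The principal obstacle lies in this converse step: self-duality supplies only a one-parameter family of symmetries of the joint Laplace transform of $(W,V)$, whereas conditional symmetry of $Y$ is a condition on the marginal of $W$ alone. Since $W$ and $V$ are generally dependent through the martingale structure $V=[Y]_{T}-[Y]_{\tau}$, the passage from joint to marginal symmetry cannot be achieved by purely analytic means; it requires carefully exploiting the pathwise relationship between $Y$ and its bracket, which is the substantive content of Tehranchi's original proof.
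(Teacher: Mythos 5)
This theorem is not proved in the paper at all: it is imported verbatim as Tehranchi's Theorem~3.1 and used as a black box, and the authors explicitly record (in the discussion after Theorem~\ref{Self-duality and Ocone}) that Tehranchi's proof rests on the characterization that a continuous local martingale $Y$ is conditionally symmetric if and only if the law of $Y_{T}$ given $\mathcal{F}_{t}\vee\sigma\left(\left[Y\right]_{T}\right)$ is normal with mean $Y_{t}$ and variance $\left[Y\right]_{T}-\left[Y\right]_{t}$. That conditional Gaussian structure is exactly the ingredient missing from your sketch, and its absence creates a genuine gap already in the direction you call easy. Writing $W=Y_{T}-Y_{\tau}$ and $V=\left[Y\right]_{T}-\left[Y\right]_{\tau}$: first, Definition~\ref{def:r-sym} only constrains the terminal increment $Y_{T}-Y_{\tau}$ for each stopping time $\tau$, so ``iterating at subsequent stopping times and invoking continuity'' does not deliver the joint conditional symmetry of the post-$\tau$ path, nor of $(W,V)$; upgrading marginal to joint symmetry is precisely the hard content of Tehranchi's Gaussian characterization. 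Second, even if you grant that $(W,V)$ and $(-W,V)$ have the same conditional law given $\mathcal{F}_{\tau}$, the claimed ``short algebraic manipulation'' to $\phi_{\tau}(p)=\phi_{\tau}(1-p)$ does not go through: from $(1-p)(W-V/2)=(W-V/2)-p(W-V/2)$ the sign flip turns $E_{P}[e^{(1-p)(W-V/2)}\,\vert\,\mathcal{F}_{\tau}]$ into $E_{P}[e^{-W-V/2+p(W+V/2)}\,\vert\,\mathcal{F}_{\tau}]$, which is not $\phi_{\tau}(p)$. What actually closes the gap is that conditional Gaussianity gives $\phi_{\tau}(p)=E_{P}[e^{(p^{2}-p)V/2}\,\vert\,\mathcal{F}_{\tau}]$, which is invariant under $p\mapsto 1-p$ because $p^{2}-p=(1-p)^{2}-(1-p)$; without the Cameron--Martin shift supplied by the Gaussian structure, sign-flip symmetry alone is insufficient.

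For the converse you diagnose the obstacle accurately --- self-duality yields a one-parameter family of identities for the joint conditional transform of $(W,V)$, whereas conditional symmetry of $Y$ concerns the marginal of $W$ --- but you do not overcome it; your paragraph ends by deferring to ``the substantive content of Tehranchi's original proof.'' The proposed route via Proposition~\ref{symmetry under H-gen} and Girsanov in the parameter $w$ is not developed into an argument. As it stands, neither implication is established, and the proposal is best read as a correct identification of why the theorem is hard rather than as a proof.
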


One particular problem in this context is that stochastic exponentials can be
strict local martingales in which case it would not be possible to use them as
density processes for the measure transform leading to the dual market in
financial interpretations. An example class of positive self-dual continuous
martingales is provided by stochastic exponentials of conditionally symmetric
$BMO$-martingales, see \cite{kaz} for a detailed exposition of $BMO$-theory.

\begin{proposition}
\label{symmetry and BMO}Let $Y$ be a continuous conditionally symmetric
martingale such that there exists a constant $C$ with%
\begin{equation}
\sup_{0\leq\tau\leq T}E\left[  |Y_{T}-Y_{\tau}| \vert\mathcal{F}_{\tau
}\right]  \leq C. \label{BMO condition}%
\end{equation}
Then $Y$ is a $BMO$-martingale and its stochastic Dol\'{e}ans-exponential
$\mathcal{E}\left(  \alpha Y\right)  $ is a martingale for each $\alpha
\in\mathbb{R}$. Moreover, the following two assertions are equivalent:
\end{proposition}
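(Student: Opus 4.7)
The plan is to establish the two structural assertions of the proposition---that $Y$ is a BMO-martingale and that $\mathcal{E}(\alpha Y)$ is a true martingale for every $\alpha\in\mathbb{R}$---as consequences of the BMO theory developed in Kazamaki~\cite{kaz}.

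For the BMO assertion, I would observe that the hypothesis $\sup_\tau\|E[|Y_T-Y_\tau|\vert\mathcal{F}_\tau]\|_\infty\le C$ is precisely the $BMO_1$-norm bound $\|Y\|_{BMO_1}\le C$. For continuous martingales, Garsia's lemma and the John-Nirenberg inequality (\cite{kaz}, Chapter~2) yield the equivalence of the norms $\|\cdot\|_{BMO_p}$ for $p\ge 1$, so $\|Y\|_{BMO_2}<\infty$ and hence $Y\in BMO$. Conditional symmetry is not needed in this step.

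For the assertion on $\mathcal{E}(\alpha Y)$: since $BMO$ is a linear space, $\alpha Y\in BMO$ with $\|\alpha Y\|_{BMO}=|\alpha|\,\|Y\|_{BMO}$ for every $\alpha\in\mathbb{R}$. I would invoke Kazamaki's characterisation (\cite{kaz}) of continuous BMO-martingales: for each finite BMO-norm there exists an exponent $p=p(\|\alpha Y\|_{BMO})>1$ such that $\mathcal{E}(\alpha Y)$ satisfies the reverse Hölder inequality $R_p$; in particular $\mathcal{E}(\alpha Y)\in L^p$ and hence is a uniformly integrable martingale on $[0,T]$. Applying this for each $\alpha$ gives the claim.

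The main point requiring care is to use the Kazamaki criterion in the version valid for arbitrary finite BMO-norm rather than the small-norm version: the critical exponent $p(\|M\|_{BMO})$ approaches $1$ as $\|M\|_{BMO}\to\infty$ but remains strictly above $1$ throughout, which is precisely what is needed to accommodate the growth of $\|\alpha Y\|_{BMO}$ in $|\alpha|$. The conditional symmetry hypothesis, retained throughout, plays no role in these two structural claims---it will be essential in the equivalence of the two subsequent assertions (whose statement lies outside the excerpt), where it should couple with the self-duality machinery of Proposition~\ref{symmetry under H-gen} and Theorem~\ref{symmetry and self-duality} to produce the characterisation that presumably constitutes the genuine content of the proposition.
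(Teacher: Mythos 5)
Your argument is correct and matches the paper's proof: the hypothesis is read as a $BMO_1$-bound giving $Y\in BMO$ by the equivalence of $BMO_p$-norms, and the martingale property of $\mathcal{E}(\alpha Y)$ for every $\alpha$ follows from Kazamaki's Theorem~2.3 via the reverse H\"older inequality, exactly as in the paper. Your reading of the remaining equivalence (whose items (i) and (ii) lie outside the quoted statement) also agrees with the paper, which derives it from Theorems~2.3, 2.4 and 3.4 of Kazamaki together with Theorem~\ref{symmetry and self-duality}.
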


\begin{description}
\item[(i)] $S=\mathcal{E}\left(  Y\right)  $\textit{ is a positive self-dual
martingale which satisfies for some }$p>1$\textit{ the reverse H\"{o}lder
inequality }$R_{p}(P)$\textit{, or, equivalently the Muckenhoupt inequality
}$A_{q}(Q)$\textit{ for }$q=\left(  p+1\right)  /p$\textit{, both with the
same constant.}

\item[(ii)] $Y$\textit{ is a conditionally symmetric }$BMO$%
\textit{-martingale.}
\end{description}

\begin{proof}
Condition~(\ref{BMO condition}) implies that $Y\in BMO$. Consequently, by
Theorem 2.3 in~\cite{kaz}, $\mathcal{E}\left(  \alpha Y\right)  $ is a
martingale (and not a strict local martingale).

The uniform boundedness for all stopping times of the l.h.s.
of~(\ref{self-duality numeraire}) for $f(x)=\left\vert x\right\vert ^{p}$
corresponds to $R_{p}(P)$, and of the r.h.s. of~(\ref{self-duality numeraire})
for $f(x)=\left\vert x\right\vert ^{q}$ to $A_{q}(Q)$. The equivalence of (i)
and (ii) then follows from Theorems~2.3,~2.4 and 3.4 in~\cite{kaz}, together
with Theorem~\ref{symmetry and self-duality}.
\end{proof}

\medskip

The process $X=\log(S)$ is, in contrast to $Y$, typically not a martingale. As
$X=Y-\frac{1}{2}\left[  Y\right]  $, the \emph{minimal martingale measure
}$\widehat{P}$ (see~\cite{Schw}) for $X$ is well-defined if $\mathcal{E}%
\left(  \frac{1}{2}Y\right)  $ is a martingale, and has then the density
\[
\frac{d\widehat{P}}{dP}=\exp(\frac{1}{2}Y_{T}-\frac{1}{8}\left[  Y\right]
_{T})=\exp\left(  \frac{1}{2}X_{T}+\frac{1}{8}\left[  X\right]  _{T}\right)
.
\]

The \emph{minimal entropy martingale measure }$Q^{E}$ for $X$ is a martingale
measure which minimizes the relative entropy with respect to $P$ over all
martingale measures for $X$. It can be characterised as the martingale measure
for $X$ with finite relative entropy such that
\[
\frac{dQ^{E}}{dP}=\exp\left(  c+\int_{0}^{T}\eta_{t}\,dX_{t}\right)  ,
\]
where $\eta$ is a predictable process with the property that $\int\eta\,dX$ is
a $Q$-martingale for all martingale measures $Q$ with finite relative entropy,
see~\cite{GrR}. It follows from Corollary~\ref{symmetry under H} that under
mild conditions the measure $H$ with density%
\[
\frac{dH}{dP}=\exp\left(  c+\frac{1}{2}X_{T}\right)
\]

is a martingale measure for $X$. The preceding discussion shows that
typically, $H$ is the minimal entropy martingale measure with $\eta=1/2$. This
is in general different from the minimal martingale measure, see
\cite{GrR},\textit{ }p.~1036. Moreover, it is remarkable that $Q^{E}=H$ has
such a simple form, which has consequences for the structure of conditionally
symmetric martingales. In fact, for all $t\in\left[  0,T\right]  $ the measure
$H^{t}$ with density%
\begin{equation}
\frac{dH^{t}}{dP}=\exp\left(  c_{t}+\frac{1}{2}X_{t}\right)
\label{measures H^t}%
\end{equation}

is a martingale measure for $X$ on $\left[  0,t\right]  $ but the normalizing
constant $c_{t}$ depends of course on $t$.

\begin{definition}
Let $M$ be a continuous local martingale, and denote the right-continuous and
complete filtration generated by $M$ as $\mathbb{F}^{M}$. $M$ is said to have
the \textbf{PRP} (predictable representation property), if every
$\mathbb{F}^{M}$-adapted local martingale $N$ can be written as $N=N_{0}%
+\int\vartheta\,dM$ for some predictable, $M$-integrable process $\vartheta$.
\end{definition}

\begin{proposition}
Let $Y$ be a continuous $P$-martingale which is conditionally symmetric up to
$T$ and which has the PRP. Assume that $S=\mathcal{E}\left(  Y\right)  $ is a
martingale, and that the minimal martingale measure $\widehat{P}$ exists for
$X=Y-\frac{1}{2}\left[  Y\right]  $. Then $Y$ is a Gaussian martingale.
\end{proposition}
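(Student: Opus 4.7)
The plan is to use the predictable representation property to identify the $P$-density process of the half measure $H$ in two independent ways, and then compare them.

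First I would observe that by Theorem~\ref{symmetry and self-duality} applied to $Y$, the martingale $S=\mathcal{E}(Y)$ is self-dual, so by Corollary~\ref{symmetry under H} the process $X=\log S$ is conditionally symmetric under $H$. Combined with the observation made after~(\ref{measure H}) that $X_T$ has all moments under $H$, this upgrades $X$ to a genuine $H$-martingale; in other words, $H$ is a martingale measure for $X$.

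Next I would set $L_t=E_P[\sqrt{S_T}\mid\mathcal{F}_t]/E_P[\sqrt{S_T}]$, the $P$-density process of $H$. Since $L$ is a strictly positive continuous $P$-martingale with $L_0=1$ and $Y$ has the PRP, It\^o's formula gives a representation $L=\mathcal{E}\!\big(\int\rho\,dY\big)$ for some $Y$-integrable predictable $\rho$. By Girsanov, the $P$-semimartingale $X=Y-\tfrac12[Y]$ admits the $H$-decomposition
\[
X=Y^H+\int\!\big(\rho-\tfrac12\big)\,d[Y],\qquad Y^H:=Y-\int\rho\,d[Y],
\]
with $Y^H$ an $H$-local martingale. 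Because $X$ is itself an $H$-martingale by the previous step, the drift must vanish, forcing $\rho=\tfrac12$ $d[Y]$-a.e.; consequently $L=\mathcal{E}(\tfrac12 Y)$, which is a true martingale by the assumed existence of $\widehat{P}$.

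Finally, equating the two expressions for $L_T$ and comparing exponents,
\[
\exp\!\Big(\tfrac12 Y_T-\tfrac18[Y]_T\Big)=L_T=\frac{\exp\!\big(\tfrac12 Y_T-\tfrac14[Y]_T\big)}{E_P[\sqrt{S_T}]},
\]
yields $[Y]_T=-8\log E_P[\sqrt{S_T}]$, a $P$-a.s.\ constant. Applying the same reasoning with $T$ replaced by an arbitrary $t\in[0,T]$ (the hypotheses restrict to each subinterval) shows that $[Y]_t$ is deterministic for every $t$, so the continuous martingale $Y$ has deterministic quadratic variation and is therefore Gaussian. The main technical obstacle will be the upgrade from $H$-local martingale to true $H$-martingale for $X$, since it is precisely this that licenses the Girsanov drift calculation to pin down $\rho=1/2$; all subsequent manipulations are routine once that integrability is in place.
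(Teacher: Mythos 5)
Your proof is correct and rests on the same underlying mechanism as the paper's --- identifying the half measure $H$ with the minimal martingale measure $\widehat{P}$ and then observing that the two explicit densities $\exp\left(\tfrac12 Y_t-\tfrac18\left[Y\right]_t\right)$ and $c_t\exp\left(\tfrac12 X_t\right)$ can only coincide if $\left[Y\right]$ is deterministic --- but you carry out the identification differently. The paper transfers the PRP from $Y$ under $P$ to $X$ under $\widehat{P}$ (Revuz--Yor, Exercise VIII~1.27) and then invokes the second fundamental theorem of asset pricing from \cite{CS} to get uniqueness of the martingale measure for $X$ on each $\mathcal{F}_t$, whence $\widehat{P}=H^{t}$. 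You instead prove that uniqueness by hand: the PRP lets you write the density process of $H$ as $\mathcal{E}\left(\int\rho\,dY\right)$, and Girsanov combined with the $H$-martingale property of $X$ forces $\rho=\tfrac12$. Your route is more self-contained and makes the drift cancellation explicit; the paper's is shorter and avoids the stochastic-logarithm bookkeeping. One step you should make explicit (the paper uses it tacitly as well): the final passage from $T$ to an arbitrary $t$ needs that conditional symmetry of the continuous local martingale $Y$ up to $T$ restricts to conditional symmetry up to every $t<T$, so that $S$ is self-dual on $\left[0,t\right]$ and $H^{t}$ is a martingale measure for $X$ there; this holds by Tehranchi's Gaussian characterisation of conditionally symmetric continuous local martingales but is not immediate from the definition. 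Likewise, both arguments implicitly take the ambient filtration to be the one generated by $Y$ so that the density process of $H$ is covered by the PRP.
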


\begin{proof}
By~\cite{RY}, Exercise VIII 1.27., the fact that $Y$ has the PRP under $P$
implies that $X$ has the PRP under $\widehat{P}$. Moreover, existence of
$\widehat{P}$ implies existence of the probability measures $H^{t}$ as defined
in~(\ref{measures H^t}) because%
\[
E\left[  \exp\left(  \frac{1}{2}X_{t}\right)  \right]  \leq E\left[
\exp\left(  \frac{1}{2}X_{t}+\frac{1}{8}\left[  X\right]  _{t}\right)
\right]  =E\left[  E\left[  \left.  \frac{d\widehat{P}}{dP}\right\vert
\mathcal{F}_{t}\right]  \right]  =1.
\]
Since $Y$ is conditionally symmetric up to $T$, it follows from
Theorem~\ref{symmetry and self-duality} that $S$ is self-dual, and hence, by
Proposition~\ref{symmetry under H}, $X$ is a conditionally symmetric
martingale under each $H^{t}$. In particular, $H^{t}$ is a martingale measure
for $X$ on $[0,t]$.
The PRP implies by the second fundamental theorem of asset pricing, see
Theorem~1.17 of~\cite{CS}, that $\widehat{P}=H^{t}$ on $\mathcal{F}_{t}$ which
yields%
\[
\exp\left(  \frac{1}{2}Y_{t}-\frac{1}{8}\left[  Y\right]  _{t}\right)
=\exp\left(  \frac{1}{2}X_{t}+\frac{1}{8}\left[  X\right]  _{t}\right)
=\exp\left(  c_{t}+\frac{1}{2}X_{t}\right)  ,
\]
for all $t\leq T$. It follows that $\left[  Y\right]  =\left[  X\right]  $
must be deterministic, and therefore $Y$ is a Gaussian martingale.
\end{proof}

\medskip

The next result completely characterises quasi self-dual continuous
semimartingales in terms of conditional symmetry.

\begin{theorem}
\label{quasi self-duality continuous martingales}A continuous positive
semimartingale $S$ is quasi self-dual of non-vanishing order $\alpha
=1-2\kappa$, if and only if $S^{\alpha}$ is a martingale and $S=e^{\kappa
\left[  M\right]  }\mathcal{E}\left(  M\right)  $ for a continuous
conditionally symmetric local martingale $M$. For $\alpha=0$ we assume in
addition that $S=\exp(M)$ for an integrable process $M$. In that case, $S$ is
quasi self-dual of order zero if and only if $M$ is a continuous conditionally
symmetric martingale.
\end{theorem}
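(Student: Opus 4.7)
The plan is to split into the cases $\alpha\neq 0$ and $\alpha=0$, reducing each to the conditional symmetry of a suitable continuous (local) martingale.

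For $\alpha=0$, I would set $M=\log S$, which is a well-defined continuous process by continuity and positivity of $S$, and use the bijection $x\mapsto e^x$ to pass between a non-negative Borel $f$ on $(0,\infty)$ and $g(y)=f(e^y)$ on $\mathbb{R}$. The order-zero quasi self-duality identity $E_P[f(S_T/S_\tau)\vert\mathcal{F}_\tau]=E_P[f(S_\tau/S_T)\vert\mathcal{F}_\tau]$ then becomes $E_P[g(M_T-M_\tau)\vert\mathcal{F}_\tau]=E_P[g(M_\tau-M_T)\vert\mathcal{F}_\tau]$ for all non-negative Borel $g$, i.e.\ exactly the conditional symmetry of $M$. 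Once $M$ is integrable, choosing $g(x)=x^\pm$ forces $E_P[M_T-M_\tau\vert\mathcal{F}_\tau]=0$, so $M$ is a martingale; the converse is obtained by reading the same chain of equivalences backwards.

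For $\alpha\neq 0$, I would first apply Proposition~\ref{characterization of quasi self-duality} to reduce quasi self-duality of $S$ to self-duality of $S^\alpha$. Since $S$ is a continuous positive semimartingale, so is $S^\alpha$, and taking $f\equiv 1$ in the self-duality identity forces $E_P[S_T^\alpha/S_\tau^\alpha\vert\mathcal{F}_\tau]=1$, so $S^\alpha$ is in fact a continuous positive martingale. Theorem~\ref{symmetry and self-duality} (Tehranchi) then says that $S^\alpha$ is self-dual if and only if $S^\alpha=\mathcal{E}(Y)$ for some continuous conditionally symmetric local martingale $Y$. The rescaling $M:=Y/\alpha$ preserves continuity and the local martingale property, and the defining symmetry identity for $Y$ transforms under the substitution $g(x)=f(x/\alpha)$ into the corresponding identity for $M$, so $M$ is conditionally symmetric as well.

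The last step is to translate $S^\alpha=\mathcal{E}(\alpha M)$ into $S=e^{\kappa[M]}\mathcal{E}(M)$ via the calculation $\alpha\log S=\alpha M-\tfrac{\alpha^{2}}{2}[M]$, whence $\log S=M-\tfrac{\alpha}{2}[M]=M-\tfrac{1}{2}[M]+\kappa[M]$ after substituting $\alpha/2=1/2-\kappa$. The converse runs this computation backwards: the assumed representation together with the martingale property of $S^\alpha$ yields $S^\alpha=\mathcal{E}(\alpha M)$, conditional symmetry of $M$ propagates to $\alpha M$, and Theorem~\ref{symmetry and self-duality} returns self-duality of $S^\alpha$, hence quasi self-duality of $S$ by Proposition~\ref{characterization of quasi self-duality}. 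Beyond this bookkeeping and the scaling invariance of conditional symmetry I do not anticipate a genuine obstacle, since the substantive content is already packaged in the cited proposition and theorem.
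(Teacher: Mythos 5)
Your proposal is correct and follows essentially the same route as the paper: reduce the case $\alpha\neq 0$ to self-duality of $S^{\alpha}$ via Proposition~\ref{characterization of quasi self-duality} and Theorem~\ref{symmetry and self-duality}, then convert $S^{\alpha}=\mathcal{E}(\alpha M)$ into $S=e^{\kappa[M]}\mathcal{E}(M)$ by the exponent bookkeeping $\kappa-\tfrac{1}{2}=-\tfrac{\alpha}{2}$, and handle $\alpha=0$ by transporting conditional symmetry through $\log$ and $\exp$. The only cosmetic difference is that in the converse for $\alpha\neq 0$ the paper identifies the stochastic logarithm of $S^{\alpha}$ as $\alpha M$ via uniqueness of the canonical semimartingale decomposition, whereas you obtain $S^{\alpha}=\mathcal{E}(\alpha M)$ by direct substitution of $\alpha=1-2\kappa$; both are valid.
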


\begin{proof}
For $\alpha\neq0$, $S$ is quasi self-dual if and only if $S^{\alpha}$ is
self-dual for some $\alpha$, hence in particular a positive continuous
martingale. We can then write by Theorem~\ref{symmetry and self-duality}%
\[
S^{\alpha}=\mathcal{E}\left(  \alpha M\right)  =\exp\left(  \alpha M-\frac
{1}{2}\alpha^{2}\left[  M\right]  \right)  =e^{\alpha\kappa\left[  M\right]
}\mathcal{E}\left(  M\right)  ^{\alpha}%
\]
for some conditionally symmetric local martingale $M$. On the other hand, if
$S=e^{\kappa\left[  M\right]  }\mathcal{E}\left(  M\right)  $, we have%
\[
S^{\alpha}=e^{\alpha\kappa\left[  M\right]  }\mathcal{E}\left(  M\right)
^{\alpha}=\exp\left(  \alpha M+\left(  \kappa-\frac{1}{2}\right)
\alpha\left[  M\right]  \right)  .
\]
Since $S^{\alpha}$ is a positive martingale it follows that $S^{\alpha
}=\mathcal{E}\left(  N\right)  $ for a continuous local martingale $N$. The
uniqueness of the canonical semimartingale decomposition implies $N=\alpha M$
which implies that
\[
\left(  \kappa-\frac{1}{2}\right)  \alpha=-\frac{1}{2}\alpha^{2}.
\]
\noindent In the case when $\alpha\neq0$, dividing by $\alpha$ yields the
result. If $\alpha=0$, we start by assuming that $S$ is quasi self-dual of
order $\alpha=0$. For an arbitrary non-negative Borel function $f$, define
$g=f\circ\log$. By assumption we have for any stopping time $\tau\in\left[
0,T\right]  $
\begin{align*}
E\left[  \left.  f\left(  M_{T}-M_{\tau}\right)  \right\vert \mathcal{F}%
_{\tau}\right]   &  =E\left[  \left.  g\left(  \exp(M_{T}-M_{\tau})\right)
\right\vert \mathcal{F}_{\tau}\right] \\
&  =E\left[  \left.  g\left(  \exp(M_{\tau}-M_{T})\right)  \right\vert
\mathcal{F}_{\tau}\right]  =E\left[  \left.  f\left(  M_{\tau}-M_{T}\right)
\right\vert \mathcal{F}_{\tau}\right]  ,
\end{align*}
for an arbitrary non-negative Borel function $f$, i.e.\ $M$ is conditionally
symmetric combined with the integrability assumption, also a martingale, and
it is clearly continuous. Furthermore, $S=\exp(M)=e^{\frac{1}{2}\left[
M\right]  }\mathcal{E}\left(  M\right)  $ holds.

Conversely, if $M=\log(S)$ is a continuous conditionally symmetric martingale,
then for all non-negative Borel functions $f$ define $g=f\circ\exp$ so that
\begin{align*}
E\left(  \left.  f(\exp\left(  M_{T}-M_{\tau}\right)  )\right\vert
\mathcal{F}_{\tau}\right)   &  =E\left(  \left.  g\left(  M_{T}-M_{\tau
}\right)  \right\vert \mathcal{F}_{\tau}\right) \\
&  =E\left(  \left.  g\left(  M_{\tau}-M_{T}\right)  \right\vert
\mathcal{F}_{\tau}\right)  = E\left(  \left.  f\left(  \exp(M_{\tau}%
-M_{T})\right)  \right\vert \mathcal{F}_{\tau}\right)  ,
\end{align*}
which implies the quasi self-duality of order zero.
\end{proof}

\begin{example}
[Geometric Brownian motion]The results presented lead to the following view of
the symmetries of geometric Brownian motion. Let $Y=\sigma W$ for a standard
Brownian motion $W$ and $\sigma>0$. Since $Y$ is a continuous conditionally
symmetric martingale, Theorem~\ref{symmetry and self-duality} yields that
\[
\mathcal{E}\left(  Y\right)  =\exp\left(  Y-\frac{1}{2}\left[  Y\right]
\right)  =\exp\left(  \sigma W-\frac{1}{2}\sigma^{2}t\right)
\]
is a self-dual process. Denoting by $\lambda$ a shift parameter, we consider
the process
\[
S=\exp\left(  \sigma W-\frac{1}{2}\sigma^{2}t+\lambda t\right)  =\exp\left(
\kappa\lbrack Y]\right)  \mathcal{E}\left(  Y\right)  \,,\quad\text{where
}\quad\kappa=\frac{\lambda}{\sigma^{2}}\,.
\]
By Theorem~\ref{quasi self-duality continuous martingales} $S$ is quasi
self-dual of order zero if and only if $\lambda=\frac{1}{2}\sigma^{2}$ (since
we have the ordinary exponent of a continuous conditionally symmetric
martingale) and, in view of the fact that $S^{\alpha}$, $\alpha=1-2\kappa
=1-\frac{2\lambda}{\sigma^{2}}$, is a martingale, it is quasi self-dual of
order $\alpha$, cf.\ e.g.~\cite{car:cho97,CL}.
\end{example}

\section{Ocone martingales and strong self-duality}

\label{sec:Ocone}

\medskip In this section we discuss a connection between Ocone martingales and
a strong version of self-duality of their associated stochastic exponentials,
as motivated by the discussion in \cite{T}. However, the previously introduced
conditional notions of symmetry resp.\ self-duality are not quite fitting for
such a discussion as Ocone martingales in particular enjoy a stronger notion
of symmetry.

\begin{definition}
Let $M$ be a continuous $P$-martingale vanishing at zero and such that
$\left[  M\right]  _{\infty}=\infty$, and consider its Dambis-Dubins-Schwartz
(DDS) representation $M=B_{\left[  M\right]  }$. The process $M$ is called an
\textbf{Ocone martingale} if $B$ and $\left[  M\right]  $ are independent.
\end{definition}

It has been proved in~\cite{VY} that if a martingale is Ocone and has the PRP,
then it is Gaussian. A more interesting example of an Ocone martingale is
given by the solution of the stochastic differential equation%
\begin{align*}
dM_{t}  &  =V_{t}\,dB_{t},\\
dV_{t}  &  =-\mu V_{t}\,dt+\sqrt{V_{t}}\,dW_{t},
\end{align*}

where $\mu>0$ and $B$, $W$ are two independent Brownian motions. This follows
by~\cite{BNS}, Ch.~2,~Th.~2.6, since $\left[  M\right]  =\int V^{2}\,dt$ is
independent of $B$. Moreover, L\'{e}vy's stochastic area process is also an
Ocone martingale, see \cite{VY}.

\begin{definition}
An adapted process $X$ is process symmetric if $X\sim-X$ (i.e.\ the finite
dimensional distributions of $X$ and $-X$ are the same). In particular, for
semimartingales $X$ with $X_{0}=0$ this is equivalent to%
\[
E\left[  \exp\left(  i\int_{0}^{T}\theta_{t}\,dX_{t}\right)  \right]
=E\left[  \exp\left(  -i\int_{0}^{T}\theta_{t}\,dX_{t}\right)  \right]
\quad\forall\theta\in\mathcal{S},
\]

where $\mathcal{S}$ denotes the space of deterministic and bounded Borel
functions on $\left[  0,T\right]  $.
\end{definition}

\begin{remark}
Ocone martingales are always process symmetric, see Tehranchi \cite{T}.
\end{remark}

It is important to stress that different symmetry concepts are not equivalent.
Since for example conditional symmetry implies the martingale property for
integrable processes, we have that an integrable process symmetric $X$ which
is not a martingale cannot be conditionally symmetric. For example, if $Z$ is
a symmetric integrable random variable, then the process $\left(  Zt\right)
_{t\in\left[  0,T\right]  }$ is still process symmetric but not a martingale.
Less obvious is that there are also process symmetric martingales which are
not conditionally symmetric.

\begin{example}
\label{ex:counter}$\left(  i\right)  $ The martingale $M=\int B^{2}\,dB$ is
process symmetric since%
\[
-\int B^{2}\,dB=\int\left(  -B\right)  ^{2}\,d\left(  -B\right)  .
\]

Since
\[
B= \int\left(  \frac{d[M]}{dt}\right)  ^{-\frac{1}{2}}dM\,,
\]
we have that the Brownian filtration $\mathbb{F}=\left(  \mathcal{F}%
_{t}\right)  $ equals the filtration $\mathbb{F}^{M}$ generated by $M$.
Moreover, $M$ has the PRP, but is non-Gaussian, and hence not Ocone.

\medskip

$\left(  ii\right)  $ It is worth noting, in light of Theorem
\ref{Self-duality and Ocone}, that the stochastic exponential $\mathcal{E}%
\left(  M\right)  $ is a strict local martingale. This follows e.g.\ by
Corollary 2.2 of \cite{MU} since the with $M$ associated auxiliary diffusion%
\[
d\widetilde{Y}_{t}=\widetilde{Y}_{t}^{2}\,dt+dB_{t}%
\]
does explode.

\medskip

$\left(  iii\right)  $ However, $M$ is not conditionally symmetric. Choose
$0<t<T$ and assume by means of contradiction that $M$ is conditionally
symmetric. In particular,
\[
E[(M_{T}-M_{t})^{3}|\mathcal{F}_{t}]=E[(M_{t}-M_{T})^{3}|\mathcal{F}_{t}]\,,
\]
since the symmetry is satisfied by the positive and the negative part of the
third conditional moment, so that
\[
E[(M_{T}-M_{t})^{3}|\mathcal{F}_{t}]=0
\]
holds a.s., where we have used that $E\left[  |M_{T}-M_{t}|^{3}\right]
<\infty$. However, noting that $W_{s}=B_{t+s}-B_{t}$ defines a Brownian motion
independent of $\mathcal{F}_{t}$ and that $B_{t}$ is $\mathcal{F}_{t}%
$-measurable, we can write
\[
M_{T}-M_{t}=\int_{0}^{T-t}W_{s}dW_{s}+B_{t}^{2}W_{T-t}+B_{t}(W_{T-t}%
^{2}-(T-t))\,.
\]
By a straightforward but lengthy calculation, the third conditional moment of
this martingale increment can, for $t<T$, be written as a nontrivial
polynomial in $B_{t}$, which, for $t>0$, will not take a.s.\ only values in
the roots of the polynomial, i.e.\ we end up with a contradiction.
\end{example}

\medskip

It is observed by Tehranchi~\cite{T} that continuous Ocone martingales are
conditionally symmetric with respect to deterministic times. The next result
shows that this is still true for bounded stopping times.

\begin{lemma}
\label{le:ocone-io} A continuous Ocone martingale $M$ with natural filtration
$\mathbb{F=}(\mathcal{F}_{t})$ is conditionally symmetric.
\end{lemma}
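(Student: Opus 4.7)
My plan is to use the Dambis--Dubins--Schwartz (DDS) representation of $M$ together with the Ocone independence and the strong Markov property of Brownian motion. Write $M_t=B_{[M]_t}$, where $B$ is a standard Brownian motion; by the Ocone hypothesis, $B$ is independent of $[M]$. Let $\mathcal{H}:=\sigma\bigl([M]_{s}:s\ge 0\bigr)$ denote the $\sigma$-algebra generated by the whole path of the quadratic variation.

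The first step is to condition on $\mathcal{H}$. Under this conditioning, $B$ remains a standard Brownian motion (by independence), while the clock $t\mapsto [M]_t$ becomes deterministic; hence, conditionally on $\mathcal{H}$, the process $M=B\circ [M]$ is a continuous centred Gaussian martingale with deterministic variance. Since $[M]$ is continuous and non-decreasing, one also has the natural identification $\mathcal{F}_t^M\vee \mathcal{H}=\mathcal{F}_{[M]_t}^B \vee \mathcal{H}$ via the time change, and an $\mathbb{F}^M$-stopping time $\tau\le T$ is sent to the stopping time $\sigma:=[M]_\tau\le [M]_T$ of the filtration $(\mathcal{F}_v^B\vee\mathcal{H})_{v\ge 0}$ by the standard time change of stopping times (cf.\ Revuz--Yor, Ch.~V). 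Applying the strong Markov property of $B$ in this enlarged filtration, and using that $[M]_T$ is $\mathcal{H}$-measurable, we get that $B_{[M]_T}-B_\sigma=M_T-M_\tau$, conditional on $\mathcal{F}_\sigma^B\vee\mathcal{H}$, is centred Gaussian with (conditional) variance $[M]_T-[M]_\tau$, and in particular symmetric about zero. Thus for every non-negative Borel $f$,
\[
E\bigl[f(M_T-M_\tau)\,\big|\,\mathcal{F}_\tau\vee\mathcal{H}\bigr]=E\bigl[f(M_\tau-M_T)\,\big|\,\mathcal{F}_\tau\vee\mathcal{H}\bigr],
\]
and taking $E[\,\cdot\,|\,\mathcal{F}_\tau]$ of both sides via the tower property absorbs $\mathcal{H}$ and yields the claim.

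The main obstacle is the bookkeeping in the second step: one must verify carefully that under the conditional law given $\mathcal{H}$, $\sigma=[M]_\tau$ really is a stopping time for the Brownian filtration (enlarged by $\mathcal{H}$) and that $\mathcal{F}_\tau\vee\mathcal{H}\subseteq\mathcal{F}_\sigma^B\vee\mathcal{H}$, particularly on intervals where $[M]$ may be flat. This is folklore for time-changed filtrations but is where all the measure-theoretic work lives; once it is in place, the rest of the argument is simply the symmetry of Brownian increments from a stopping time to a later deterministic time, supplied by the strong Markov property.
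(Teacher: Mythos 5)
Your proposal is correct and follows essentially the same route as the paper: DDS representation, initial enlargement of the Brownian filtration by $\sigma([M])$ (where the Ocone independence guarantees $B$ stays a Brownian motion), verification that $[M]_\tau$ is a stopping time of the enlarged filtration, symmetry of the post-$[M]_\tau$ Brownian increment, and the tower property. The measure-theoretic bookkeeping you flag as the remaining work is exactly what the paper supplies, via the identity $\{[M]_\tau\leq t\}=\{\tau\leq A_t\}$ with $A_t=\inf\{s:[M]_s>t\}$ and references to Dubins--\'Emery--Yor and Kallenberg.
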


\begin{proof}
Since $M$ is Ocone we have $M=\beta_{\left[  M\right]  }$ for a Brownian
motion $\beta$ (with natural filtration $\mathbb{B=}\left(  \mathcal{B}%
_{t}\right)  $) being independent of $[M]$. Denote by $\mathbb{G=}\left(
\mathcal{G}_{t}\right)  $ the right-continuous enlargement of the filtration
$\mathcal{B}_{t}\vee\sigma([M])$. Following~\cite[p.~129]{dub:em:yor93} we
have that for $\tau$ being an $\mathbb{F}$-stopping time it follows that
$[M]_{\tau}$ is a $\mathbb{G}$-stopping time. Indeed, by denoting $A_{t}%
=\inf\{s:[M]_{s}>t\}$ we have
\[
\{[M]_{\tau}\leq t\}=\{\tau\leq A_{t}\}\in\mathcal{F}_{A_{t}}\subset
\bigcap_{\varepsilon>0}\sigma(M^{A_{t+\varepsilon}})\subset\bigcap
_{\varepsilon>0}[\mathcal{B}_{t+\varepsilon}\vee\sigma([M])]=\mathcal{G}%
_{t}\,,
\]
where in our case $\tau\leq T<\infty$ and $[M]$ is continuous, so that we end
up with a finite stopping time. As in the proof of Lemma~2 on p.~129
in~\cite{dub:em:yor93} the Ocone property and Lemma~1 on p.~129
in~\cite{dub:em:yor93} imply that $\beta$ is a $\mathbb{G}$-Brownian motion.
By~\cite[Th.~13.11]{kalle} $\beta_{u}^{\prime}=\beta_{u+[M]_{\tau}}%
-\beta_{\lbrack M]_{\tau}}$ is a Brownian motion independent of $\mathcal{G}%
_{[M]_{\tau}}$. Hence, for $\tau\leq T<\infty$ we have for any non-negative
Borel function $f$
\begin{align*}
E[f(M_{T}-M_{\tau})|\mathcal{F}_{\tau}]  &  =E[E[f(\beta_{\lbrack M]_{T}%
}-\beta_{\lbrack M]_{\tau}})|\mathcal{G}_{[M]_{\tau}}]|\mathcal{F}_{\tau}]\\
&  =E[E[f(\beta_{\lbrack M]_{\tau}}-\beta_{\lbrack M]_{T}})|\mathcal{G}%
_{[M]_{\tau}}]|\mathcal{F}_{\tau}]=E[f(M_{\tau}-M_{T})|\mathcal{F}_{\tau}]\,,
\end{align*}
so that $M$ is conditionally symmetric.
\end{proof}

\medskip

We assume w.l.o.g. that $S_{0}=1$ and set $S=\exp(X)=\mathcal{E}\left(
Y\right)  $ where $X$ (and then $Y$) is a continuous semimartingale.

Recall that $\mathcal{S}$ is the space of deterministic and bounded Borel
functions on $\left[  0,T\right]  $. For $\phi\in\mathcal{S}$, we set
$X^{\phi}=\int\phi\,dY-\frac{1}{2}\int\phi^{2}\,d\left[  Y\right]  $, and
$S^{\phi}:=\exp\left(  X^{\phi}\right)  =\mathcal{E}\left(  \int
\phi\,dY\right)  $. In the case that all $S^{\phi}$ are martingales, we define
dual probability measures $Q^{\phi}$ via%
\[
\frac{dQ^{\phi}}{dP}=S_{T}^{\phi}.
\]

\begin{definition}
We say that $S=\mathcal{E}(Y)$, for a continuous martingale $Y$, is strongly
self-dual if for all $\phi\in\mathcal{S}$, the $S^{\phi}$ are martingales and
that with equality in distribution as a process living on $\left[  0,T\right]
$,
\[
\left\{  S^{\phi}\,,P\right\}  \sim\left\{  \frac{1}{S^{\phi}}\,,Q^{\phi
}\right\}  \,.
\]

\end{definition}

In the case when $S$ is strongly self-dual then, by choosing $\phi=1$, it also
implies an unconditional form of the self-duality property known as put-call
symmetry, which is the most frequently used definition in the previous
literature.

\begin{lemma}
\label{iff cond for g.s.d.}$S$ is strongly self-dual if and only if for all
$\phi\in\mathcal{S}$, the $S^{\phi}$ are martingales and
\begin{equation}
\left\{  X^{\phi}\,,P\right\}  \sim\left\{  -X^{\phi}\,, Q^{\phi}\right\}  \,.
\label{symmetry X phi}%
\end{equation}

\end{lemma}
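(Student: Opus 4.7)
The plan is to observe that the equivalence is essentially a pointwise change of variable via the bijection $\exp:\mathbb{R}\to(0,\infty)$ and its inverse $\log$, and then to argue that this change of variable preserves equality in distribution as a process.

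First, I note that the martingale condition on all $S^{\phi}$ (needed to define the measures $Q^{\phi}$) is explicit in both formulations, so there is nothing to prove there. For the distributional part, I record the key algebraic identity: since $S^{\phi}=\exp(X^{\phi})$, we have $1/S^{\phi}=\exp(-X^{\phi})$ as processes. Hence the strong self-duality statement
\[
\{S^{\phi},P\}\sim\{1/S^{\phi},Q^{\phi}\}
\]
is the same as
\[
\{\exp(X^{\phi}),P\}\sim\{\exp(-X^{\phi}),Q^{\phi}\}.
\]

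Next I invoke the standard fact that equality in distribution of two processes on $[0,T]$ means equality of all finite-dimensional marginals, and that these marginals are transformed in a one-to-one fashion by applying a fixed continuous bijection coordinate by coordinate. Applying $\log$ pointwise to $\exp(\pm X^{\phi})$ recovers $\pm X^{\phi}$, so the displayed distributional equality is equivalent to (\ref{symmetry X phi}); applying $\exp$ goes the other way. Both directions of the lemma follow at once.

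The only place where one must be slightly careful is to make sure that ``$\sim$'' is interpreted as equality of all finite-dimensional distributions rather than, say, equality in law at a single time; since $\log$ is a measurable bijection of $(0,\infty)$ onto $\mathbb{R}$ with measurable inverse $\exp$, the pushforward of any joint law under $(\log,\dots,\log)$ is uniquely determined by the original, and conversely. There is no genuine obstacle: this lemma is essentially a reformulation, and I would present it as such in two or three lines once the identity $1/S^{\phi}=\exp(-X^{\phi})$ is recorded.
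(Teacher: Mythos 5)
Your proposal is correct and follows essentially the same route as the paper, which also reduces the claim to the identity $1/S^{\phi}=\exp(-X^{\phi})$ and transfers the distributional equality by composing an arbitrary non-negative functional with $\log$ (resp.\ $\exp$). Nothing is missing.
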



\begin{proof}
Note that the martingale assumptions are the same. Assume that $S$ is strongly
self-dual. For an arbitrary non-negative functional $F$ define $G$ via
\[
G:=F\circ\log\,.
\]
Then
\begin{align*}
E_{P}\left[  F(X_{t}^{\phi}\,,0\leq t\leq T)\right]   &  =E_{P}\left[
G(S_{t}^{\phi}\,,0\leq t\leq T)\right]  =E_{Q^{\phi}}\left[  G\left(  \frac
{1}{S_{t}^{\phi}}\,,0\leq t\leq T\right)  \right] \\
&  =E_{Q^{\phi}}\left[  F(-X_{t}^{\phi}\,,0\leq t\leq T)\right]  \,,
\end{align*}
so that~(\ref{symmetry X phi}) follows. The converse direction follows similarly.
\end{proof}

\medskip

For the record, we state the next proposition which is analogous to
Corollary~\ref{symmetry under H}. If for $\phi\in\mathcal{S}$, $\exp\left(
X^{\phi}\right)  $ is a $P$-martingale, then%
\[
c_{\phi}=E\left[  \exp\left(  \frac{1}{2}X_{T}^{\phi}\right)  \right]
<\infty,
\]

so we can define probability measures $H^{\phi}$ (analogous to the half
measure $H$) via%
\[
\frac{dH^{\phi}}{dP}=c_{\phi}^{-1}\exp\left(  \frac{1}{2}X_{T}^{\phi}\right)
.
\]

\begin{proposition}
$S$ is strongly self-dual if and only if for all $\phi\in\mathcal{S}$ we have
that the $S^{\phi}$ are martingales and
%
the $X^{\phi}$ are process symmetric under the measures $H^{\phi}$.
\end{proposition}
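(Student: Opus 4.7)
The plan is to recast both sides of the proposition as equalities of complex-valued characteristic functionals and to bridge them by a pair of well-chosen exponential path functionals. Throughout I assume, as is common to both sides of the claim, that $S^{\phi}$ is a $P$-martingale for every $\phi\in\mathcal{S}$; this is what makes $c_{\phi}<\infty$, $dQ^{\phi}/dP = e^{X_{T}^{\phi}}$ and $dH^{\phi}/dP = c_{\phi}^{-1}e^{X_{T}^{\phi}/2}$ well-defined probability densities.

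The first step is to translate both sides into characteristic-functional form. By Lemma~\ref{iff cond for g.s.d.}, strong self-duality is equivalent to
\[
E_{P}\!\left[e^{i\int_{0}^{T}\theta_{s}\,dX_{s}^{\phi}}\right] = E_{Q^{\phi}}\!\left[e^{-i\int_{0}^{T}\theta_{s}\,dX_{s}^{\phi}}\right]\qquad\forall\,\phi,\theta\in\mathcal{S},
\]
while process symmetry of $X^{\phi}$ under $H^{\phi}$ is, by its very definition, equivalent to
\[
E_{H^{\phi}}\!\left[e^{i\int_{0}^{T}\theta_{s}\,dX_{s}^{\phi}}\right] = E_{H^{\phi}}\!\left[e^{-i\int_{0}^{T}\theta_{s}\,dX_{s}^{\phi}}\right]\qquad\forall\,\theta\in\mathcal{S}.
\]
In either case, testing against simple deterministic $\theta$ exhausts the finite-dimensional distributions of $X^{\phi}$, so it suffices to verify the displayed identities.

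For the forward implication I would apply the equality in law of $\{X^{\phi},P\}$ and $\{-X^{\phi},Q^{\phi}\}$ to the functional $F(x)=\exp\bigl(\tfrac{1}{2}x(T)+i\int_{0}^{T}\theta\,dx\bigr)$. Integrability is immediate, since $E_{P}[e^{X_{T}^{\phi}/2}]=c_{\phi}$ and $E_{Q^{\phi}}[e^{-X_{T}^{\phi}/2}]=E_{P}[e^{X_{T}^{\phi}/2}]=c_{\phi}$, and a single application of $dQ^{\phi}/dP=e^{X_{T}^{\phi}}$ rewrites the resulting identity as $E_{P}[e^{\frac{1}{2}X_{T}^{\phi}+i\int\theta\,dX^{\phi}}] = E_{P}[e^{\frac{1}{2}X_{T}^{\phi}-i\int\theta\,dX^{\phi}}]$, which upon division by $c_{\phi}$ is precisely the $H^{\phi}$-characteristic-functional identity. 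For the converse I would symmetrically plug the dual functional $G(x)=\exp\bigl(-\tfrac{1}{2}x(T)+i\int_{0}^{T}\theta\,dx\bigr)$ into the $H^{\phi}$-equality in law of $X^{\phi}$ and $-X^{\phi}$; the bounds $E_{H^{\phi}}[e^{\pm X_{T}^{\phi}/2}]=c_{\phi}^{-1}$ secure integrability, and stripping off $dH^{\phi}/dP$ produces $E_{P}[e^{i\int\theta\,dX^{\phi}}] = E_{P}[e^{X_{T}^{\phi}-i\int\theta\,dX^{\phi}}] = E_{Q^{\phi}}[e^{-i\int\theta\,dX^{\phi}}]$, which by Lemma~\ref{iff cond for g.s.d.} is strong self-duality.

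The main -- indeed, essentially the only -- technical point is that $F$ and $G$ are complex-valued and of unbounded modulus, so equality in law of paths does not mechanically give equality of their expectations. This is settled by the integrability checks above applied to the modulus, together with a standard real/imaginary-part splitting if one prefers to reduce to non-negative functionals; every remaining step is a mechanical application of Bayes' rule relative to the explicit densities of $Q^{\phi}$ and $H^{\phi}$ over $P$.
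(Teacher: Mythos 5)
Your proof is correct and follows essentially the same route as the paper: both reduce the two properties to characteristic-functional identities via Lemma~\ref{iff cond for g.s.d.} and bridge them by Bayes' rule with the explicit densities of $Q^{\phi}$ and $H^{\phi}$ relative to $P$, using exactly the exponential functionals $\exp\left(\pm\tfrac{1}{2}x(T)+i\int_{0}^{T}\theta\,dx\right)$ that appear implicitly in the paper's displayed computations. Your explicit remark on the integrability of these unbounded complex-valued functionals is a point the paper passes over silently.
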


\begin{proof}
The strong self-duality implies the martingale assumptions and with
Lemma~\ref{iff cond for g.s.d.} it implies for arbitrary $\lambda$, $\phi
\in\mathcal{S}$ that
\[
c_{\phi}^{-1}E_{P}\left[  \exp\left(  \frac{1}{2}X_{T}+i\int_{0}^{T}%
\lambda_{t}\,dX_{t}^{\phi}\right)  \right]  =c_{\phi}^{-1}E_{Q^{\phi}}\left[
\exp\left(  \frac{1}{2}(-X_{T})+i\int_{0}^{T}\lambda_{t}\,d(-X_{t}^{\phi
})\right)  \right]  \,,
\]
while from the definitions of $H^{\phi}$ and $Q^{\phi}$
\[
E_{H^{\phi}}\left[  \exp\left(  i\int_{0}^{T}\lambda_{t}\,dX_{t}^{\phi
}\right)  \right]  =c_{\phi}^{-1}E_{P}\left[  \exp\left(  \frac{1}{2}%
X_{T}+i\int_{0}^{T}\lambda_{t}\,dX_{t}^{\phi}\right)  \right]
\]
and
\begin{align*}
E_{H^{\phi}}\left[  \exp\left(  -i\int_{0}^{T}\lambda_{t}\,dX_{t}^{\phi
}\right)  \right]   &  =c_{\phi}^{-1}E_{P}\left[  \exp\left(  \frac{1}{2}%
X_{T}+i\int_{0}^{T}\lambda_{t}\,d(-X_{t}^{\phi})\right)  \right] \\
&  =c_{\phi}^{-1}E_{Q^{\phi}}\left[  \exp\left(  \frac{1}{2}(-X_{T})+i\int
_{0}^{T}\lambda_{t}\,d(-X_{t}^{\phi})\right)  \right]  \,.
\end{align*}
Since $\lambda$ and $\phi$ were arbitrarily chosen, we end up with the process
symmetries of the processes $X^{\phi}$ under $H^{\phi}$.

Conversely, the process symmetries imply for arbitrary $\lambda$ (and $\phi
$)$\in\mathcal{S}$ that
\[
c_{\phi}E_{H^{\phi}}\left[  \exp\left(  -\frac{1}{2}X_{T}+i\int_{0}^{T}%
\lambda_{t} \,dX_{t}^{\phi}\right)  \right]  =c_{\phi}E_{H^{\phi}}\left[
\exp\left(  -\frac{1}{2}(-X_{T})+i\int_{0}^{T}\lambda_{t} \,d(-X_{t}^{\phi
})\right)  \right]  \,,
\]
while again by the definitions of $H^{\phi}$ and $Q^{\phi}$
\[
E_{P}\left[  \exp\left(  i\int_{0}^{T}\lambda_{t} \,dX_{t}^{\phi}\right)
\right]  =c_{\phi}E_{H^{\phi}}\left[  \exp\left(  -\frac{1}{2}X_{T}+i\int
_{0}^{T}\lambda_{t} \,dX_{t}^{\phi}\right)  \right]
\]
and
\begin{align*}
E_{Q^{\phi}}\left[  \exp\left(  -i\int_{0}^{T}\lambda_{t} \,dX_{t}^{\phi
}\right)  \right]   &  =E_{P}\left[  \exp\left(  X_{T}+i\int_{0}^{T}%
\lambda_{t} \,d(-X_{t}^{\phi})\right)  \right] \\
&  =c_{\phi}E_{H^{\phi}}\left[  \exp\left(  -\frac{1}{2}(-X_{T})+i\int_{0}%
^{T}\lambda_{t} \,d(-X_{t}^{\phi})\right)  \right]  \,.
\end{align*}
Hence, in view of the imposed martingale assumptions, the strong self-duality
now follows by Lemma~\ref{iff cond for g.s.d.}.
\end{proof}

\medskip

In the following result we show that the Ocone property translates one-to-one
via lifting by stochastic exponentiation into the strong self-duality property.

\begin{theorem}
\label{Self-duality and Ocone} A continuous martingale $Y$ is an Ocone
martingale if and only if $\mathcal{E}\left(  Y\right)  $ is strongly self-dual.
\end{theorem}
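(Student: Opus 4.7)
The plan is to link strong self-duality with the characterization of the Ocone property as conditional Gaussianity of $Y$ given $\sigma([Y])$, equivalently $E_P[\exp(i\int_0^T\phi\,dY)\mid\sigma([Y])]=\exp(-\tfrac{1}{2}\int_0^T\phi^2\,d[Y])$ for every $\phi\in\mathcal{S}$. I will use throughout the notation $M^\phi:=\int\phi\,dY$, $[M^\phi]=\int\phi^2\,d[Y]$, and $\tilde M^\phi:=M^\phi-[M^\phi]$, which by Girsanov is a $Q^\phi$-local martingale, so that $X^\phi=M^\phi-\tfrac{1}{2}[M^\phi]$ under $P$ while $X^\phi=\tilde M^\phi+\tfrac{1}{2}[M^\phi]$ under $Q^\phi$.

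For the direction $Y$ Ocone $\Rightarrow$ $\mathcal{E}(Y)$ strongly self-dual, I would invoke $Y=\beta_{[Y]}$ with $\beta$ a Brownian motion independent of $\sigma([Y])$, so that each $M^\phi$, conditional on $\sigma([Y])$, is a centered Gaussian process with deterministic quadratic variation $[M^\phi]$. Wald's exponential identity applied conditionally gives $E_P[\mathcal{E}(M^\phi)_T\mid\sigma([Y])]=1$, making $S^\phi$ a genuine martingale and leaving the law of $[Y]$ unchanged under $Q^\phi$. Cameron--Martin on the conditional Gaussian measure identifies $\{(\tilde M^\phi,[Y]),Q^\phi\}\sim\{(M^\phi,[Y]),P\}$, while reflection symmetry of the centered conditional Gaussian yields $\{(M^\phi,[Y]),P\}\sim\{(-M^\phi,[Y]),P\}$. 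Chaining these and substituting $X^\phi=M^\phi-\tfrac{1}{2}[M^\phi]$ under $P$ and $-X^\phi=-\tilde M^\phi-\tfrac{1}{2}[M^\phi]$ under $Q^\phi$ gives $\{X^\phi,P\}\sim\{-X^\phi,Q^\phi\}$, and Lemma~\ref{iff cond for g.s.d.} then delivers strong self-duality.

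For the converse, strong self-duality gives $\{X^\phi,P\}\sim\{-X^\phi,Q^\phi\}$ for every $\phi\in\mathcal{S}$. Since $[X^\phi]=\int\phi^2\,d[Y]$ is pathwise reconstructible from $X^\phi$, so is $[Y]$ whenever $\phi$ is bounded away from zero. Testing the symmetry against a bounded Borel functional depending only on $[Y]$ and rewriting via $dQ^\phi/dP=\exp(X^\phi_T)$ yields $E_P[\exp(X^\phi_T)\mid\sigma([Y])]=1$, i.e.\ $E_P[\exp(M^\phi_T)\mid\sigma([Y])]=\exp(\tfrac{1}{2}[M^\phi]_T)$. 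The decisive step is to replace $\phi$ with $\alpha\phi$ for arbitrary $\alpha\in\mathbb{R}$: applying strong self-duality to $\alpha\phi\in\mathcal{S}$ upgrades the previous identity to $E_P[\exp(\alpha M^\phi_T)\mid\sigma([Y])]=\exp(\tfrac{\alpha^2}{2}[M^\phi]_T)$ for every real $\alpha$, which identifies the conditional law of $M^\phi_T$ given $\sigma([Y])$ as $\mathcal{N}(0,[M^\phi]_T)$. Varying $\phi$ over step functions $\sum_i\alpha_i\mathbf{1}_{[t_{i-1},t_i)}$ with non-zero coefficients and applying the Cram\'er--Wold device yields conditional joint Gaussianity of the finite-dimensional marginals of $Y$ with independent Gaussian increments of variance $[Y]_{t_i}-[Y]_{t_{i-1}}$, which is exactly the Ocone property.

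The main obstacle is the converse direction: one must collapse the two-measure symmetry between $P$ and the family $\{Q^\phi\}$ into a single-measure conditional statement under $P$. The decisive leverage is the rescaling $\phi\mapsto\alpha\phi$, which exploits the fact that strong self-duality is required for \emph{every} $\phi\in\mathcal{S}$ (and not just a single one) to convert a single equality at $\alpha=1$ into a full conditional MGF family. Technical care is needed to ensure that $\sigma([Y])$ is captured by the path of $X^\phi$ (handled by restricting attention to $\phi$ bounded away from zero, which already suffices for the Ocone characterization via step functions) and that each $\mathcal{E}(\alpha M^\phi)$ is a true martingale (automatic from the strong self-duality hypothesis applied to $\alpha\phi$), with the characterization in~\cite{VY} providing the conceptual framework that makes this reduction possible.
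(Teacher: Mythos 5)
Your proposal is correct in substance, but it takes a genuinely different route from the paper in both directions, most notably in the converse. For the forward implication the paper simply cites Theorem~1 and Comment~2 of Vostrikova--Yor to get that each $\mathcal{E}\bigl(\int\phi\,dY\bigr)$ is a martingale and that $\{[Y],P\}\sim\{[Y],Q^{\phi}\}$, and then runs a characteristic-functional computation using the process symmetry of $\widetilde{Y}=Y-\int\phi\,d[Y]$ under $Q^{\phi}$; you instead re-derive these facts from scratch via the DDS representation, a conditional Wald identity, conditional Cameron--Martin and reflection symmetry --- same content, more self-contained. The real divergence is in the converse: the paper verifies the Vostrikova--Yor invariance criterion $\{[Y],P\}\sim\{[Y],Q^{\phi}\}$ for \emph{all} $\phi\in\mathcal{S}$, which forces it to handle $\phi$ vanishing on a set via an approximation $\phi^{(n)}\to\phi$ with a uniform-integrability (entropy) estimate, and to remove the square-integrability assumption on $Y$ by localization. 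Your scaling trick $\phi\mapsto\alpha\phi$ bypasses all of this: from $E_{P}\bigl[\exp\bigl(X^{\alpha\phi}_{T}\bigr)\,\big\vert\,\sigma([Y])\bigr]=1$ for every $\alpha\neq 0$ you recover the full conditional Laplace transform of $\int\phi\,dY$ given $\sigma([Y])$, and since you only ever need step functions with non-vanishing coefficients (extending to general $\theta$ by continuity of conditional characteristic functions), neither the uniform-integrability argument nor the localization is required. What your route costs is that the final step --- ``conditionally Gaussian independent increments with variances $\Delta[Y]$ implies Ocone'' --- is not the literal definition but Ocone's characterization theorem (condition (1) of Vostrikova--Yor, Theorem~1), so it must be cited explicitly just as the paper cites a different one of the equivalent conditions; and one must be slightly careful that the conditional moment generating function identity holds a.s.\ simultaneously for all $\alpha$ (countable dense set plus convexity) before concluding conditional normality. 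With those two points made explicit, your argument is a valid and arguably leaner proof of the harder direction.
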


\begin{proof}
Let first $Y$ be a continuous Ocone martingale, and set for $\phi
\in\mathcal{S}$%
\[
\widetilde{Y}=Y-\int\phi\,d\left[  Y\right]  .
\]
Here we suppress the dependency of $\widetilde{Y}$ on $\phi$ for ease of notation.

By Theorem 1 and Comment 2 of Vostrikova \& Yor \cite{VY}, it holds that $Y$
is a continuous Ocone martingale if and only if

\medskip

$\left(  i\right)  $ For all $\phi\in\mathcal{S}$,
\begin{equation}
\left\{  Y\,,P\right\}  \sim\left\{  \widetilde{Y}\,,Q^{\phi}\right\}  \,,
\label{VY Ocone}%
\end{equation}

or equivalently,%
\begin{equation}
\left\{  \left[  Y\right]  ,P\right\}  \sim\left\{  \left[  Y\right]  \,,
Q^{\phi}\right\}  ; \label{VY Ocone 2}%
\end{equation}

note that $\left[  Y\right]  =\left[  \widetilde{Y}\right]  $.

\medskip

$\left(  ii\right)  $ $\mathcal{E}\left(  \int\phi\,dY\right)  $ is a
martingale for all $\phi\in\mathcal{S}$.

\medskip

We now show that $\left\{  X^{\phi}\,,P\right\}  \sim\left\{  -X^{\phi
}\,,Q^{\phi}\right\}  $ for all $\phi\in\mathcal{S}$, which implies (in view
of~$(ii)$) by Lemma \ref{iff cond for g.s.d.} strong self-duality of
$S=\mathcal{E}\left(  Y\right)  $. First note that $\tilde Y$ under $Q^{\phi}$
is also Ocone and thus, in particular process symmetric. Furthermore, we have
for all $\lambda\,,\psi\in\mathcal{S}$ by the aforementioned properties of
Ocone martingales, cf.\ also Lemma~2.5 of~\cite{O}, that
\begin{align*}
&  E\left[  \exp\left(  i\int_{0}^{T}\lambda_{t}\,dX_{t}^{\phi}\right)
\right]  =E\left[  \exp\left(  i\int_{0}^{T}\lambda_{t}\,d\left(  \int_{0}%
^{t}\phi_{s}\,dY_{s}-\frac{1}{2}\int_{0}^{t}\phi_{s}^{2}\,d\left[  Y\right]
_{s}\right)  \right)  \right] \\
&  =E_{Q^{\phi}}\left[  \exp\left(  i\int_{0}^{T}\lambda_{t}\,d\left(
\int_{0}^{t}\phi_{s}\,d\widetilde{Y}_{s}-\frac{1}{2}\int_{0}^{t}\phi_{s}%
^{2}\,d\left[  \widetilde{Y}\right]  _{s}\right)  \right)  \right] \\
&  = E_{Q^{\phi}}\left[  \exp\left(  i\int_{0}^{T}\lambda_{t}\,d\left(
\int_{0}^{t}\phi_{s}\,d\left(  -\widetilde{Y}_{s}\right)  -\frac{1}{2}\int
_{0}^{t}\phi_{s}^{2}\,d\left[  \widetilde{Y}\right]  _{s}\right)  \right)
\right] \\
&  =E_{Q^{\phi}}\left[  \exp\left(  i\int_{0}^{T}\lambda_{t}\,d\left(
-\left(  \int_{0}^{t}\phi_{s}\,d Y_{s} -\int_{0}^{t}\phi_{s}^{2}\,d\left[
Y\right]  _{s}+\frac{1}{2}\int_{0}^{t}\phi_{s}^{2}\,d\left[  Y\right]
_{s}\right)  \right)  \right)  \right] \\
&  =E_{Q^{\phi}}\left[  \exp\left(  i\int_{0}^{T}\lambda_{t}\,d\left(
-\left(  \int_{0}^{t}\phi_{s}\,dY_{s}-\frac{1}{2}\int_{0}^{t}\phi_{s}%
^{2}\,d\left[  Y\right]  _{s}\right)  \right)  \right)  \right] \\
&  =E_{Q^{\phi}}\left[  \exp\left(  -i\int_{0}^{T}\lambda_{t}\,dX_{t}^{\phi
}\right)  \right]
\end{align*}

which proves the claim and hence the first implication.

\medskip

\noindent As for the other direction, we will show property (\ref{VY Ocone 2})
for all $\phi\in\mathcal{S}$. Let $\mathcal{E}\left(  \int\phi\,dY\right)
=\exp(X^{\phi})$ where $X^{\phi}=\int\phi\,dY-\frac{1}{2}\int\phi
^{2}\,d\left[  Y\right]  $, and define probability measures $Q^{\phi}$ via
\[
dQ^{\phi}/dP=\mathcal{E}\left(  \int\phi\,dY\right)  _{T}=\exp\left(
X_{T}^{\phi}\right)  .
\]

We assume first that $\phi$ is bounded away from zero, and that $Y$ is a
square-integrable martingale, i.e. $E\left[  Y_{t}^{2}\right]  <\infty$ for
all $t\geq0$. Note that $\left[  X^{\phi}\right]  =\int\phi^{2}\,d\left[
Y\right]  $ and therefore $\left[  Y\right]  =\int\phi^{-2}\,d\left[  X^{\phi
}\right]  $. We have for every non-negative functional $F$, with $F\left(
U\right)  $ denoting $F\left(  U_{t};0\leq t\leq T\right)  $ for any
stochastic process $U$,
\[
E_{Q^{\phi}}\left[  F\left(  \left[  Y\right]  \right)  \right]  =E_{P}\left[
\exp\left(  X_{T}^{\phi}\right)  F\left(  \int\phi^{-2}\,d\left[  X^{\phi
}\right]  \right)  \right]  \newline=E_{Q^{\phi}}\left[  F\left(  \int
\phi^{-2}\,d\left[  X^{\phi}\right]  \right)  \right]  .
\]

On the other hand,%
\[
E_{P}\left[  F\left(  \left[  Y\right]  \right)  \right]  =E_{P}\left[
F\left(  \int\phi^{-2}\,d\left[  X^{\phi}\right]  \right)  \right]  .
\]
Hence the strong self-duality of $S$ implies by
Lemma~\ref{iff cond for g.s.d.} that $\left[  Y\right]  $ under $Q^{\phi}$ has
the same law as $\left[  Y\right]  $ under $P$, for all $\phi\in\mathcal{S}$
which are bounded away from zero. Equivalently, for all such $\phi$; $N\in%
\mathbb{N}
$ arbitrary; $0\leq t_{1}\leq...\leq t_{N}\leq T$; and arbitrary
$u=(u_{1},...,u_{N})\in%
\mathbb{R}
^{N}$ we set%
\[
\Psi(u)=\exp\left(  i\left(  u_{1}\left[  Y\right]  _{t_{1}}+...+u_{N}\left[
Y\right]  _{t_{N}}\right)  \right)  ,
\]
and have that\noindent%
\begin{align}
E\left[  \Psi(u)\exp\left(  \int_{0}^{T}\phi_{t}\,dY_{t}-\frac{1}{2}\int
_{0}^{T}\phi_{t}^{2}\,d\left[  Y\right]  _{t}\right)  \right]   &
=E_{Q^{\phi}}\left[  \Psi(u)\right] \nonumber\\
&  =E\left[  \Psi(u)\right]  . \label{Psi equivalenz}%
\end{align}

Let now $\phi\in\mathcal{S}$ be arbitrary, i.e., in particular $\phi$ may
vanish on a set $\Gamma\subset\left[  0,T\right]  $. We denote by
$\phi^{\left(  n\right)  }\in\mathcal{S}$ functions which coincide with $\phi$
as long as $\left\vert \phi_{t}\right\vert \geq1/n$, and which equal $1/n$ if
$\left\vert \phi_{t}\right\vert \leq1/n$, so that $\phi^{\left(  n\right)
}\rightarrow\phi$ pointwise. By dominated convergence for stochastic integrals
(see \cite{RY}, Theorem IV.2.12), it follows that in probability,%
\[
U_{n}:=\mathcal{E}\left(  \int\phi^{(n)}\,dY\right)  _{T}\rightarrow
\mathcal{E}\left(  \int\phi\,dY\right)  _{T}.
\]
We will now show that the family $\left\{  U_{n}\right\}  _{n\in%
\mathbb{N}
}$ is uniformly integrable. For this, it suffices to show that%
\[
\sup_{n}E\left[  U_{n}\log\left(  U_{n}\right)  \right]  <\infty.
\]
Indeed, using that so far $Y$ was assumed to be a square-integrable
martingale, and therefore $E\left[  \left[  Y\right]  _{t}\right]  <\infty$
for all $t\geq0$ (see \cite{P}, Corollary II.6.3),
\begin{align*}
E\left[  U_{n}\log\left(  U_{n}\right)  \right]   &  =E_{Q^{\phi^{\left(
n\right)  }}}\left[  \int_{0}^{T}\phi_{t}^{(n)}\,dY_{t}-\frac{1}{2}\int
_{0}^{T}\left(  \phi_{t}^{(n)}\right)  ^{2}\,d\left[  Y\right]  _{t}\right] \\
&  =E_{Q^{\phi^{\left(  n\right)  }}}\left[  \int_{0}^{T}\phi_{t}%
^{(n)}\,dY_{t}-\int_{0}^{T}\left(  \phi_{t}^{(n)}\right)  ^{2}\,d\left[
Y\right]  _{t}+\frac{1}{2}\int_{0}^{T}\left(  \phi_{t}^{(n)}\right)
^{2}\,d\left[  Y\right]  _{t}\right] \\
&  =E_{Q^{\phi^{\left(  n\right)  }}}\left[  \frac{1}{2}\int_{0}^{T}\left(
\phi_{t}^{(n)}\right)  ^{2}\,d\left[  Y\right]  _{t}\right] \\
&  =E\left[  \frac{1}{2}\int_{0}^{T}\left(  \phi_{t}^{(n)}\right)
^{2}\,d\left[  Y\right]  _{t}\right]  ,
\end{align*}
since by what has already been proved, $\left[  Y\right]  $ has the same
distribution under both $Q^{\phi^{(n)}}$ and $P$ since the $\phi^{(n)}$ are
bounded away from zero. We have, since $Y$ was assumed to be
square-integrable,%
\[
E\left[  \frac{1}{2}\int_{0}^{T}\left(  \phi_{t}^{(n)}\right)  ^{2}\,d\left[
Y\right]  _{t}\,\right]  \leq\mathrm{const.}E\left[  \left[  Y\,\right]
_{T}\,\right]  <\infty.
\]

Hence the $U_{n}$ are uniformly integrable, and we conclude by
(\ref{Psi equivalenz}) that%
\begin{align*}
&  E\left[  \Psi(u)\exp\left(  \int_{0}^{T}\phi_{t}\,dY_{t}-\frac{1}{2}%
\int_{0}^{T}\phi_{t}^{2}\,d\left[  Y\right]  _{t}\right)  \right] \\
&  =\lim_{n}E\left[  \Psi(u)\exp\left(  \int_{0}^{T}\phi_{t}^{\left(
n\right)  }\,dY_{t}-\frac{1}{2}\int_{0}^{T}\left(  \phi_{t}^{\left(  n\right)
}\right)  ^{2}\,d\left[  Y\right]  _{t}\right)  \right] \\
&  =E\left[  \Psi(u)\right]  .
\end{align*}

Therefore $\left[  Y\right]  $ has the same distribution under both $Q^{\phi}$
and $P$ for all $\phi\in\mathcal{S}$. It follows by the aforementioned result
of \cite{VY} that $Y$ is an Ocone martingale.

In the case that $Y$ is a continuous martingale, not necessarily
square-integrable, the result follows by localization: there is an increasing
sequence of stopping times $\left(  T_{n}\right)  $ such that $Y^{T_{n}}$ is
bounded, e.g. $T_{n}=\inf\left\{  t:\left\vert Y_{t}\right\vert =n\right\}  $.
Therefore the previous result applies on $\left[  0,T_{n}\right]  $, each $n$.
Letting $n$ tend to infinity yields then the general result.
\end{proof}

\bigskip

This result should be seen in the context of Tehranchi's \cite{T} Theorem as
given in Theorem \ref{symmetry and self-duality}. In both results, a certain
symmetry property (conditional respectively Ocone symmetry) translates into a
self-duality property (conditional respectively strong) of the associated
stochastic exponential. The structure of the proofs, however, is completely
different. While Tehranchi's proof rests on his characterization that for a
continuous local martingale $Y$ conditional symmetry is equivalent to the
property that $Y_{T}$ given $\mathcal{F}_{t}\vee\sigma\left(  \left[
Y\right]  _{T}\right)  $ is normally distributed with expectation $Y_{t}$ and
variance $\left[  Y\right]  _{T}-\left[  Y\right]  _{t}$ for all $0\leq t\leq
T$, we work with properties of Ocone martingales as developed in \cite{O} and
\cite{VY}. Again, it is still open whether there exists \emph{any }non-Ocone
conditionally symmetric martingale.

The notion of strong self-duality is justified by an economic interpretation,
namely that the distribution of the price process $S^{\phi}$ for arbitrary
parameter function $\phi\in\mathcal{S}$ remains invariant under the dual
market transformation. Furthermore, it is shown in Theorem
\ref{Self-duality and Ocone} that it is equivalent to the Ocone property of
its stochastic logarithm. The relationsship between conditional and strong
self-duality is beyond the scope of this paper.

\bigskip

\begin{acknowledgement}
The authors are grateful to Zhanyu Chen, Ilya Molchanov, Marcel Nutz, Kaspar
Stucki and Michael Tehranchi for helpful discussions and hints. Michael
Schmutz was supported by Swiss National Fund Project Nr. 200021-126503.
Furthermore, financial support by EPRSC is gratefully acknowledged.
\end{acknowledgement}

\end{document}